\numberwithin{equation}{section}
\newtheorem{proposition}{Proposition}[section]
\newtheorem{theorem}{Theorem}
\newtheorem{lemma}[proposition]{Lemma}
\theoremstyle{definition}
\newtheorem{definition}{Definition}[section]
\theoremstyle{remark}
\newcommand{\abs}[1]{\lvert#1\rvert}
\newcommand{\bigabs}[1]{\bigl\lvert#1\bigr\rvert}
\newcommand{\Bigabs}[1]{\Bigl\lvert#1\Bigr\rvert}
\newcommand{\st}{\: :\:}%{\, \vert\, }
\newcommand{\norm}[1]{\lVert#1\rVert}
\newcommand{\R}{\mathbf{R}}
\newcommand{\dif}{\,\mathrm{d}}
\newcommand{\C}{\mathbf{C}}
\newcommand{\Lin}{\mathcal{L}}
\newcommand{\N}{\mathbf{N}}
\DeclareMathOperator{\supp}{supp}
\DeclareMathOperator{\Div}{div}
\DeclareMathOperator{\id}{id}
\DeclareMathOperator{\rank}{rank}
\title[Hardy--Sobolev inequalities and canceling operators]{Hardy--Sobolev inequalities for vector fields and canceling linear differential operators}
\author{Pierre Bousquet}
\address{Laboratoire d'analyse, topologie, probabilit\'es UMR7353\\
Aix-Marseille Universit\'e\\
CMI 39\\
Rue Fr\'ed\'eric Joliot Curie\\
13453 Marseille Cedex 13\\
France}
\email{bousquet@cmi.univ-mrs.fr}
\author{Jean Van Schaftingen}
\address{Universit\'e catholique de Louvain\\
Institut de Recherche en Math\'ematique et Physique (IRMP)\\
Chemin du Cyclotron 2 bte L7.01.01\\
1348 Louvain-la-Neuve\\
Belgium}
\email{Jean.VanSchaftingen@uclouvain.be}
\date{\today}
\subjclass[2010]{46E35 (26D10 42B20)}
\keywords{Hardy inequality; Hardy-Sobolev inequality; overdetermined elliptic operator; homogeneous differential operator; canceling operator; cocanceling operator; exterior derivative; symmetric derivative; Hodge--Hardy inequality; Korn--Hardy inequality} 
\begin{document}

\begin{abstract}
Given  a homogeneous \(k\)-th order differential operator \(A (D)\) on \(\R^n\) between two finite dimensional spaces, we establish the Hardy inequality \[\int_{\R^n} \frac{\abs{D^{k-1}u (x)}}{\abs{x}} \dif x \leq C \int_{\R^n} \abs{A(D)u}\] and the Sobolev inequality \[\norm{ D^{k-n} u }_{L^{\infty}(\R^n)}\leq C \int_{\R^n} \abs{A(D)u}\]  when \(A(D)\) is elliptic and satisfies a recently introduced cancellation property. We recover in particular a Hardy inequality due 
to V.\thinspace{}Maz\cprime{}ya, and a Sobolev inequality due to J.\thinspace{}Bourgain and H.\thinspace{}Brezis. We also study the necessity of these two conditions.
\end{abstract}

\maketitle

\tableofcontents

\section{Introduction}
Let \(k\in \N_*\) and let \(V, E\) be two real finite dimensional vector spaces. Given  a homogeneous \(k\)-th order differential operator \(A (D)\) on \(\R^n\) from  \(V\) to \(E\), we address the question of controlling any vector field \(u  \in C^\infty (\R^n; V)\)  by the vector field \(A (D) u\), where the vector differential operator \(A (D)\) is defined by
\[
 A (D) u  = \sum_{\substack{\alpha \in \N^n\\ \abs{\alpha} = k}} A_\alpha \bigl[\partial^\alpha u \bigr] \in C^\infty (\R^n; E).
\]
Here, \(A_\alpha\) is a linear map in \(\Lin (V; E)\), for every \(\alpha \in \N^n\) with \(\abs{\alpha} = k\).

The classical theory of A.\thinspace P.\thinspace Calder\'on and A.\thinspace Zygmund asserts that for every \(p \in (1, \infty)\), there exists \(C > 0\) such that for each compactly supported smooth vector field \(u  \in C^\infty_c (\R^n; V)\),
\[
 \int_{\R^n} \abs{D^{k} u }^p
 \le C  \int_{\R^n} \abs{A (D) u }^p 
\]
if and only if the operator \(A (D) \) is \emph{elliptic} \cite{CZ1952}, that is for every \(\xi \in \R^n \setminus \{0\}\), the linear map
\(A (\xi) := \sum_{\alpha \in \N^n, \abs{\alpha} = k} \xi^\alpha A_\alpha \in \Lin (V; E)\) is one-to-one \citelist{\cite{Hormander1958}*{theorem 1}\cite{Spencer}*{definition 1.7.1}\cite{Agmon1959}*{\S 7}\cite{Agmon}*{definition 6.3}}.
Examples of first-order homogeneous elliptic operators are given for \(V = \R\) by the gradient operator \(A (D) u  = \nabla u \) and for \(V = \bigwedge^\ell \R^n\) by the exterior differential and codifferential \(A (D) u  = (du , d^* u )\).

The situation is dramatically different for \(p = 1\) as there is no nontrivial estimate of the \(L^1\)-norm of some component of \(D^k u \) by \(\int_{\R^n} \abs{A (D) u}\) \citelist{\cite{Ornstein1962}\cite{CKCRAS}\cite{CKPaper}}.
This does not end the story however. Even if the quantity \(\int_{\R^n} \abs{A (D) u }\) is strictly weaker than \(\int_{\R^n} \abs{D u }\), it might still be possible to replace the latter by the former in some inequalities. 

A first inequality to which this programme was applied is the \emph{Gagliardo--Nirenberg--Sobolev inequality} \citelist{\cite{Gagliardo}\cite{Nirenberg1959}}
\begin{equation}
 \label{eqGNS}
  \Bigl(\int_{\R^n} \abs{u }^\frac{n}{n - 1}\Bigr)^{1 - \frac{1}{n}}
 \le C \int_{\R^n} \abs{D u }.
\end{equation}
The elliptic operators that can replace the derivative were characterized by a new cancellation condition.

\begin{theorem}[Van Schaftingen \cite{VSVectorL1}]
\label{theoremCancelingSobolev}
Let \(A(D)\) be an elliptic homogeneous linear differential operator of order
\(k\) on \(\R^n\) from \(V\) to \(E\) and \(\ell \in \{1, \dotsc, \min( k ,\\ n - 1)\}\).  The estimate
\[
  \Bigl(\int_{\R^n} \abs{D^{k - \ell}u }^\frac{n}{n - \ell} \Bigr)^{1-\frac{\ell}{n}} \le C \int_{\R^n} \abs{A (D) u},
\]
holds for every \(u \in C^\infty_c(\R^n; V)\)
if and only if \(A(D)\) is canceling.
\end{theorem}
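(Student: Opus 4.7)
The plan is to treat necessity and sufficiency separately; necessity is the more concrete direction and I would handle it first.

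For \emph{necessity}, I would construct a family of counterexamples whenever \(A(D)\) fails the canceling condition. Assuming the existence of a nonzero \(e\in\bigcap_{\xi\in\R^n\setminus\{0\}}A(\xi)[V]\), ellipticity supplies a tempered distribution \(U\), homogeneous of degree \(-(n-k)\), solving \(A(D)U=e\,\delta_0\): the obstruction \(e\) is exactly what is needed in order to invert \(A(\xi)\) on \(e\) at every nonzero frequency and then patch the pieces together into a homogeneous fundamental solution via the principal symbol. The derivative \(D^{k-\ell}U\) is then homogeneous of degree \(-(n-\ell)\), so it lies in the weak space \(L^{n/(n-\ell),\infty}(\R^n)\) but \emph{not} in \(L^{n/(n-\ell)}(\R^n)\). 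Mollifying \(\delta_0\) by an approximate identity and rescaling would produce a sequence \(u_\varepsilon\in C^\infty_c(\R^n;V)\) for which \(\int_{\R^n}\abs{A(D)u_\varepsilon}\) stays bounded while the left-hand side of the stated inequality diverges, contradicting the estimate.

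For \emph{sufficiency}, ellipticity provides a homogeneous left parametrix; concretely, there is a Calder\'on--Zygmund operator \(S\) of order zero and a Riesz potential representation
\[
 D^{k-\ell}u = I_\ell \ast S[A(D)u]
\]
valid for every \(u\in C^\infty_c(\R^n;V)\). The Hardy--Littlewood--Sobolev inequality immediately yields the \emph{weak-type} bound \(\norm{D^{k-\ell}u}_{L^{n/(n-\ell),\infty}}\le C\int_{\R^n}\abs{A(D)u}\), which is strictly weaker than what is claimed; indeed, the weak endpoint is precisely where the homogeneous fundamental solution used in the necessity argument lives. The heart of the proof is the upgrade from weak to strong \(L^{n/(n-\ell)}\), and this is where the canceling property must be invoked.

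To exploit cancellation I would dualize: it suffices to control \(\int_{\R^n} A(D)u \cdot \varphi\) by a suitable norm of the test field \(\varphi\) together with \(\int_{\R^n}\abs{A(D)u}\). For a generic \(L^1\) vector field no such bound is possible, but \(A(D)u\) takes its values, frequency by frequency, in \(A(\xi)[V]\), whose intersection over \(\xi\in\R^n\setminus\{0\}\) is trivial by the canceling hypothesis. Consequently, the component of \(\varphi\) in the annihilator of \(A(\xi)[V]\) can be subtracted at each frequency without affecting the pairing, and the remainder satisfies an averaged smallness that compensates the \(L^1\) deficit. The main obstacle is precisely this step: converting a qualitative cancellation condition into a quantitative endpoint gain, in a regime where classical Calder\'on--Zygmund reasoning is unavailable. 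I expect it to proceed either via an atomic or dipole decomposition in the spirit of the Bourgain--Brezis estimates for divergence-free vector fields, or via a direct pointwise estimate for Riesz potentials tested against canceling vector fields; the hypothesis \(\ell\le\min(k,n-1)\) is precisely what keeps the target exponent \(n/(n-\ell)\) finite and the scheme consistent.
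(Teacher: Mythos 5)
This theorem is not proved in the present paper: it is stated with an explicit attribution to Van Schaftingen's earlier work \cite{VSVectorL1}, serves as background and motivation, and the sufficiency argument the paper develops (Proposition~\ref{propositionSufficientHardy} and Lemma~\ref{lemmaCompensation}) is aimed at the Hardy inequality, not at the strong \(L^{n/(n-\ell)}\) Sobolev inequality. So there is no in-paper proof to compare your attempt against; I will assess it against the argument in the cited reference and against the closely related machinery the paper does develop.

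Your \emph{necessity} sketch is essentially sound and parallels what the present paper does for the Hardy case (Proposition~\ref{propositionNecessaryCanceling}). Two points need care, though. First, you cannot literally mollify \(\delta_0\) by a single approximate identity and land in \(C^\infty_c\): the inverse Fourier transform of \(\widehat{\rho_\varepsilon}(\xi)\,(A^*(\xi)A(\xi))^{-1}A^*(\xi)[e]\) is Schwartz but not compactly supported, and the low-frequency behaviour has to be controlled. The device used in the paper and in \cite{VSVectorL1} is a difference \(\rho_\lambda=\lambda^n\psi(\lambda\cdot)-\lambda^{-n}\psi(\cdot/\lambda)\) whose Fourier transform vanishes near \(0\); this is precisely what makes the construction work and a naive mollification would not. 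Second, you conclude that \(D^{k-\ell}U\) fails to be in \(L^{n/(n-\ell)}\) from its homogeneity, but you also need to know it is not identically zero; this is where one must combine \(e\in\bigcap_\xi A(\xi)[V]\) with \(G(x)[e]\equiv0\iff e\in\bigcap_\xi\ker A(\xi)^*\), whence \(A(\xi)^*A(\xi)[v]=0\) for \(e=A(\xi)[v]\) forces \(v=0\) and \(e=0\) by ellipticity. Your sketch silently relies on this.

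The \emph{sufficiency} sketch has the right outer structure but the crucial step is precisely what is left vague, and as written it does not constitute a proof. Two concrete issues. (a) The claim that Hardy--Littlewood--Sobolev ``immediately'' gives the weak-type bound is imprecise: a Calder\'on--Zygmund operator \(S\) maps \(L^1\) only into weak \(L^1\), not into \(L^1\), so you cannot simply compose \(I_\ell\) with \(S\). What is true is that the combined kernel \(G=I_\ell\ast S\) is smooth away from the origin and homogeneous of degree \(\ell-n\) (this is exactly Lemma~\ref{lemmaGreen}), hence lies in \(L^{n/(n-\ell),\infty}\), and convolution with such a kernel maps \(L^1\) into \(L^{n/(n-\ell),\infty}\). (b) For the upgrade to the strong norm, your description --- ``subtracting the component of \(\varphi\) in the annihilator of \(A(\xi)[V]\) at each frequency'' --- does not describe a workable mechanism, and neither ``atomic/dipole decomposition'' nor ``direct pointwise Riesz estimate'' matches the actual route in \cite{VSVectorL1}. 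What is actually done is: since \(A(D)\) is elliptic and canceling, there exists a cocanceling operator \(L(D)\) with \(L(D)\circ A(D)=0\); one then proves, for \(f\in L^1\) with \(L(D)f=0\), the scale-invariant dual estimate \(\abs{\int f\cdot\varphi}\le C\norm{f}_{L^1}\norm{D\varphi}_{L^n}\), by reducing first algebraically to the prototype cocanceling operator (the divergence) and then using the one-dimensional slicing/induction argument of the Bourgain--Brezis and Van Schaftingen \(L^1\)-divergence estimates. This is conceptually adjacent to, but technically different from, the integration-by-parts Lemma~\ref{lemmaCompensation} of the present paper, which yields a weighted pointwise bound \(\sum_j\int\abs{A(D)u}\abs{x}^j\abs{D^j\varphi}\) that is adapted to Hardy weights but does \emph{not} give the \(L^n\)-gradient control needed to close the Sobolev estimate. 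So as it stands, your sufficiency argument names the correct strategy (dualize, use cancellation) but leaves the actual quantitative heart of the theorem unproved.
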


The new cancellation condition was defined as

\begin{definition}
A homogeneous linear differential operator \(A(D)\) on \(\R^n\) from \(V\) to \(E\) is \emph{canceling} if 
\[
  \bigcap_{\xi \in \R^n \setminus\{0\}} A(\xi)[V]=\{0\}.
\]
\end{definition}

Theorem~\ref{theoremCancelingSobolev} covers in particular the classical inequality \eqref{eqGNS}, the Hodge--Sobolev inequality of J.\thinspace Bourgain and H.\thinspace Brezis, and L.\thinspace Lanzani and E.\thinspace Stein \citelist{\cite{BB2004}\cite{BB2007}\cite{LanzaniStein2005}} (see also \citelist{\cite{BB2002}\cite{BB2003}\cite{BourgainBrezisMironescu}\cite{Mazya2007}\cite{VS2004BBM}\cite{VS2004Divf}}) and 
the Korn--Sobolev inequality \cite{Strauss1973}.

In the present work we continue this programme for other classical inequalities in Sobolev spaces.
We begin with the classical \emph{Hardy inequality}: given \(n \geq 2\) and \(k \geq 1\), there exists \(C>0\) such that for every \(u\in C^{\infty}_c(\R^n ; V)\),
\begin{equation}
\label{eqClassicalHardy}
 \int_{\R^n} \frac{\abs{D^{k-1}u (x)}}{\abs{x}}\dif x \le C \int_{\R^n} \abs{D^k u};
\end{equation}
and we address the validity of the following inequality
\begin{equation}\label{hardyinequalityintroduction}
  \int_{\R^n} \frac{\abs{D^{k - 1}u (x)}}{\abs{x}}\dif x \le C \int_{\R^n} \abs{A (D) u}.
\end{equation}
Remarkably, it also depends on the cancellation condition.

\begin{theorem}
\label{theoremCancelingHardy}
Let \(A(D)\) be an elliptic homogeneous linear differential operator of order
\(k\) on \(\R^n\) from \(V\) to \(E\) and \(\ell \in \{1, \dotsc, \min( k  , n - 1)\}\). The estimate
\begin{equation*}
  \int_{\R^n} \frac{\abs{D^{k - \ell}u (x)}}{\abs{x}^\ell}\dif x \le C \int_{\R^n} \abs{A (D) u},
\end{equation*}
holds for every \(u \in C^\infty_c(\R^n; V)\) if and only if \(A(D)\) is canceling.
\end{theorem}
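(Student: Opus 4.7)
For \textbf{necessity} I would argue contrapositively. Suppose $A(D)$ is not canceling and pick $e_0 \in \bigcap_{\xi \neq 0} A(\xi)[V] \setminus \{0\}$. Ellipticity allows one to take the smooth section \(v(\xi) = (A(\xi)^* A(\xi))^{-1} A(\xi)^* e_0\) on \(\R^n \setminus \{0\}\), which is homogeneous of degree \(-k\) and satisfies \(A(\xi) v(\xi) = e_0\). Let \(K\) be the tempered distribution with \(\hat K = v\); then \(K\) is smooth off the origin and homogeneous of degree \(k - n\). For \(\varphi \in C_c^\infty(\R^n)\) with \(\int \varphi = 1\), the function \(w = K \ast \varphi\) solves \(A(D) w = \varphi \, e_0\), and \(D^{k-\ell} w(x)\) is asymptotic at infinity to the nontrivial \((\ell - n)\)-homogeneous function \(D^{k-\ell} K(x)\). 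Truncating with a cutoff \(\chi_R \in C_c^\infty(B_{2R})\) equal to \(1\) on \(B_R\), the compactly supported \(u_R = \chi_R w\) satisfies \(\int_{\R^n} \abs{A(D) u_R} \le C\) uniformly in \(R\) (by a commutator estimate: \(\abs{D^j \chi_R} \lesssim R^{-j}\) and \(\abs{D^{k-j} w} \lesssim R^{j - n}\) on the shell of volume \(R^n\)), while
\[
\int_{\R^n} \frac{\abs{D^{k-\ell} u_R(x)}}{\abs{x}^\ell} \dif x \gtrsim \log R \to \infty,
\]
contradicting the Hardy inequality.

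For \textbf{sufficiency} I would deduce the Hardy inequality from an affine trace inequality, averaged against a Grassmannian disintegration. The central ingredient is the trace inequality: for every linear subspace \(W \subset \R^n\) of dimension \(n - \ell\) and every \(u \in C_c^\infty(\R^n; V)\),
\[
\int_W \abs{D^{k - \ell} u} \dif \mathcal{H}^{n - \ell} \le C \int_{\R^n} \abs{A(D) u},
\]
with \(C\) uniform in \(W\). Combined with the disintegration identity
\[
\int_{\R^n} \frac{f(x)}{\abs{x}^\ell} \dif x = c_{n, \ell} \int_{G(n, n - \ell)} \int_W f \dif \mathcal{H}^{n - \ell} \dif W,
\]
valid for every nonnegative measurable \(f\) on \(\R^n\) (both sides scale as \(\lambda^{-\ell}\) under dilation and are \(O(n)\)-invariant, so they coincide up to a dimensional constant fixed on any radial example), this yields the Hardy inequality upon taking \(f = \abs{D^{k - \ell} u}\).

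The \textbf{main obstacle} is the trace inequality itself. After a linear change of variables I may assume \(W = \R^{n - \ell} \times \{0\}\). Writing \(D^{k - \ell} u = T \ast A(D) u\) with \(T = \mathcal{F}^{-1}\bigl((i\xi)^{k - \ell} L(\xi)\bigr)\) and \(L(\xi) = (A(\xi)^* A(\xi))^{-1} A(\xi)^*\) the elliptic parametrix, the kernel \(T\) is smooth off the origin and homogeneous of degree \(\ell - n\), and the trace inequality is equivalent by duality to the uniform boundedness on \(\R^n\) of \(y \mapsto \int_W T^*(x - y) \psi(x) \dif \mathcal{H}^{n - \ell}(x)\) for every \(\psi \in L^\infty(W)\) with \(\norm{\psi}_{L^\infty(W)} \le 1\). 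A naive absolute-value estimate for this integral diverges logarithmically at infinity; the canceling property of \(A(D)\) is precisely what furnishes the cancellation of \(T\) needed to recover the required \(L^\infty\) bound, by essentially the same mechanism that powers the proof of Theorem~\ref{theoremCancelingSobolev}. Carrying this out uniformly in the orientation of \(W\) is the delicate technical part; once it is done, the remaining Grassmannian averaging is formal.
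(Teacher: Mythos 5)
Your overall architecture is sound in spirit but has gaps in both halves, and neither half coincides with the paper's argument.

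\textbf{Necessity.} Your strategy --- take a nonzero \(e_0 \in \bigcap_{\xi\neq 0} A(\xi)[V]\), build a parametrix solution \(w = K * \varphi\) with \(A(D)w = c\,\varphi\, e_0\), truncate, and compare the logarithmic divergence of the Hardy integral with the uniform boundedness of \(\int \abs{A(D)u_R}\) --- is the right kind of scaling argument, and it is close in spirit to the paper's Proposition~\ref{propositionNecessaryCanceling}. However, as written it only works when \(k < n\). When \(k \ge n\), the symbol \(\xi \mapsto (A(\xi)^*A(\xi))^{-1}A(\xi)^*e_0\) has homogeneity \(-k \le -n\), so it is not locally integrable and its inverse Fourier transform \(K\) is not genuinely homogeneous of degree \(k-n\): it acquires logarithmic corrections (H\"ormander, theorem 7.1.16/7.1.18). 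Consequently \(\abs{D^{k-j}w} \lesssim R^{j-n}\) fails for the top commutator terms (already for \(j=k=n\) one gets an extra \(\log R\)), and the uniform bound on \(\int\abs{A(D)u_R}\) breaks down. The paper avoids this by never inverting \(A(D)\) down to \(u\); it inverts only to \(D^{k-\ell}u\), using the Green kernel \(G\) of Lemma~\ref{lemmaGreen}, which is always homogeneous of degree \(\ell-n<0\) precisely because \(\ell\le n-1\), and couples it with a mollifier family \(\rho_\lambda\) whose Fourier transform vanishes near the origin. Your argument can likely be repaired by using a test density \(\varphi\) with sufficiently many vanishing moments, but this is not addressed.

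\textbf{Sufficiency.} The Grassmannian disintegration
\[
\int_{\R^n} \frac{f(x)}{\abs{x}^\ell}\dif x = c_{n,\ell}\int_{G(n,n-\ell)}\int_W f \dif\mathcal{H}^{n-\ell}\dif W
\]
is correct (both sides are the unique, up to constant, \(O(n)\)-invariant measure on \(\R^n\) of homogeneity \(n-\ell\)). But this reduces the Hardy inequality to a uniform trace inequality
\[
\int_W \abs{D^{k-\ell}u}\dif\mathcal{H}^{n-\ell} \le C\int_{\R^n}\abs{A(D)u},
\]
which is a genuinely new and \emph{stronger} statement that you do not prove. Your proposed duality reduction is also incorrect as stated: \(A(D)u\) is not an arbitrary \(L^1\) field, so bounding \(\sup_{\norm{\psi}_{L^\infty(W)}\le 1}\norm{T^\vee * (\psi\,\mathcal{H}^{n-\ell}\lfloor W)}_{L^\infty}\) is neither necessary nor is it where the cancellation enters; one must test against the constrained class of fields in the range of \(A(D)\), which is exactly what makes the problem hard. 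Appealing to ``the same mechanism that powers Theorem~\ref{theoremCancelingSobolev}'' is not a proof. The paper's route (Proposition~\ref{propositionSufficientHardy}) is structurally different and avoids the trace inequality entirely: it splits the Green kernel \(G(x-y)\) into a piece \(G_1\) supported where \(\abs{y}\lesssim\abs{x}\) (whose contribution is controlled by the compensation estimate of Lemma~\ref{lemmaCompensation}, i.e. the vanishing-moment property coming from a cocanceling annihilator of \(A(D)\)) and a remainder \(G_2\) handled by a direct weighted kernel bound and Minkowski's integral inequality. If your trace inequality \emph{were} true it would be a nice result in its own right, and your deduction of Theorem~\ref{theoremCancelingHardy} from it would be clean; but as things stand the key step is missing, and I would not count this as a proof.
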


As particular cases of theorem~\ref{theoremCancelingHardy}, we have the classical Hardy inequality \eqref{eqClassicalHardy}, the inequality of V.\thinspace Maz\cprime{}ya \citelist{\cite{Mazya2010}\cite{BousquetMironescu}}: for every \(u \in C^\infty_c (\R^n;\R^n)\),
\begin{equation}
 \label{ineqMazya}
\int_{\R^n} \frac{\abs{D u (x)}}{\abs{x}}\dif x \le C \int_{\R^n} \abs{\Delta u} + \abs{\nabla \Div u},
\end{equation}
a new Hodge--Hardy inequality: for every \(u \in C^\infty_c (\R^n; \bigwedge^{\ell} \R^n)\), if \(2 \le \ell \le n - 2\), 
\[
 \int_{\R^n} \frac{\abs{u (x)}}{\abs{x}}\dif x \le C \int_{\R^n} \abs{d u} + \abs{d^* u},
\]
and a new Korn--Hardy inequality: for every \(u \in C^\infty_c(\R^n)\),
\[
 \int_{\R^n} \frac{\abs{u (x)}}{\abs{x}}\dif x \le C \int_{\R^n} \abs{\nabla^s u},
\]
where the symmetric derivative is defined as \(\nabla^s u (x) = \frac{1}{2}\bigl(D u(x) + (D u (x))^*\bigr)\).

The proof of the sufficiency of the cancellation in theorem~\ref{theoremCancelingHardy} is quite different from its counterpart in theorem~\ref{theoremCancelingSobolev}. It combines in an original way the strategy of Bousquet and Mironescu \cite{BousquetMironescu} with algebraic properties of canceling operators  \cite{VSVectorL1} and properties of Green functions \cite{HormanderI}.

To conclude our discussion of Hardy inequalities, we would like to mention that H.\thinspace Castro, J.\thinspace  D{\'a}vila and Wang Hui have recently obtained another family of unexpected Hardy inequalities \citelist{\cite{CastroWang2010}\cite{CastroDavilaWang2011}\cite{CastroDavilaWang2013}}. 
Whereas we are concerned with point singularities, they are concerned with boundary singularities and obtain inequalities of the form
\[
  \int_{\R^{n - 1} \times \R^+} \Bigabs{ D \Bigl( \frac{u (y)}{y_n}\Bigr)}\,dy
  \le C \int_{\R^{n - 1} \times \R^+} \abs{ D^2 u};
\]
cancellations also play a crucial role in their proofs.

The second inequality that we study is the limiting Sobolev inequality (see for example \cite{Brezis2011}*{chapter 9, remark 13})
\[
 \sup_{x \in \R^n} \abs{u (x)} \le C \int_{\R^n} \abs{D^n u}.
\]
We prove a limiting case of theorem~\ref{theoremCancelingSobolev} that was left open \cite{VSVectorL1}*{open problem 8.4}. Again, the cancelation property plays a role.

\begin{theorem}
\label{theoremLinfty}
Let \(A(D)\) be a homogeneous linear differential operator of order
\(k \ge n\) on \(\R^n\) from \(V\) to \(E\). If \(A (D)\) is elliptic and canceling, then the estimate
\[
  \sup_{x \in \R^n} \abs{D^{k-n}u (x)} \le C \int_{\R^n} \abs{A (D) u},
\]
holds for every \(u \in C^\infty_c(\R^n; V)\).
\end{theorem}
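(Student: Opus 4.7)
The plan is to derive Theorem~\ref{theoremLinfty} as a consequence of Theorem~\ref{theoremCancelingHardy} via radial integration. One first observes that we may assume $n \geq 2$: when $n = 1$ we have $A(\xi) = \xi^k A(1)$ for every $\xi \in \R$, so $A(\xi)V = A(1)V$ is the same subspace for all $\xi \neq 0$, and the combination of ellipticity with cancellation forces $V = \{0\}$, making the statement trivial.

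Fix $u \in C^\infty_c(\R^n; V)$ and $x_0 \in \R^n$. For any unit vector $\omega \in \mathbb{S}^{n-1}$, since $D^{k-n}u$ is compactly supported, the fundamental theorem of calculus gives
\[
 D^{k-n}u(x_0) = -\int_0^{\infty}\partial_r \bigl(D^{k-n}u\bigr)(x_0 + r\omega)\dif r,
\]
so that $\abs{D^{k-n}u(x_0)} \leq \int_0^{\infty}\abs{D^{k-n+1}u(x_0 + r\omega)}\dif r$. Averaging over $\omega \in \mathbb{S}^{n-1}$ and recognizing the result as a polar-coordinate expression of an integral over $\R^n$ centered at $x_0$,
\[
 \abs{D^{k-n}u(x_0)} \leq \frac{1}{\abs{\mathbb{S}^{n-1}}}\int_{\R^n} \frac{\abs{D^{k-n+1}u(x)}}{\abs{x - x_0}^{n-1}}\dif x.
\]

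It remains to bound this right-hand side by $C\int_{\R^n}\abs{A(D) u}$ uniformly in $x_0$. Let $\tilde u(y) = u(y + x_0)$; by translation invariance of $A(D)$, one has $\int_{\R^n} \abs{A(D)\tilde u} = \int_{\R^n} \abs{A(D) u}$. The exponent $\ell = n - 1$ is admissible in Theorem~\ref{theoremCancelingHardy} because $k \geq n$ and $n \geq 2$, so that result applied to $\tilde u$ yields
\[
 \int_{\R^n} \frac{\abs{D^{k-n+1}\tilde u(y)}}{\abs{y}^{n-1}}\dif y \leq C \int_{\R^n} \abs{A(D) \tilde u} = C \int_{\R^n} \abs{A(D) u},
\]
and the change of variable $y = x - x_0$ identifies this integral with the one in the previous display. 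Taking the supremum over $x_0$ concludes the proof. Under this approach, the entire substantive difficulty of Theorem~\ref{theoremLinfty} is absorbed into Theorem~\ref{theoremCancelingHardy}; what remains is essentially a one-dimensional radial-integration exercise, so the main obstacle to expect is simply checking that the admissibility range of $\ell$ in Theorem~\ref{theoremCancelingHardy} is wide enough to include $\ell = n - 1$, which it is precisely under the hypothesis $k \geq n$.
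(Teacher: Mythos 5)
Your proof is correct and follows essentially the same route as the paper: apply the Hardy inequality of Theorem~\ref{theoremCancelingHardy} (equivalently, proposition~\ref{propositionSufficientHardy}) with $\ell = n-1$ to the translate of $u$, then recover the pointwise bound by integrating the radial derivative over $\mathbb{S}^{n-1}$. Your explicit dispatching of the degenerate case $n = 1$ is a small welcome addition the paper leaves implicit.
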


Theorem~\ref{theoremLinfty} covers in particular the estimate of P.\thinspace Mironescu
\(
 \norm{\nabla u}_{L^{\infty}} \le C \norm{\Delta^{\frac{n}{2}} \nabla u}_{L^1}
\)
for \(n \in \N\) odd \cite{Mironescu2010}. The proof of theorem~\ref{theoremLinfty} relies on theorem~\ref{theoremCancelingHardy}.

The cancellation is not necessary for the estimate of theorem~\ref{theoremLinfty} to hold \cite{VSVectorL1}*{remark 5.1}: for example, the differentiation operator on \(\R\) is \emph{not canceling}, but the inequality \(\norm{u}_{L^\infty} \le \norm{u'}_{L^1}\) holds for every \(u \in C^\infty_c (\R)\).

In the scalar case \(\dim V=1\), the inequalities of theorems~\ref{theoremCancelingSobolev},~\ref{theoremCancelingHardy} and~\ref{theoremLinfty} follow from the Sobolev embedding of \(W^{1,1} (\R^n)\) in the Lorentz space \(L^{\frac{n}{n - 1}, 1} (\R^n)\) \citelist{\cite{Alvino1977}\cite{Tartar1998}}
\[
  \norm{u}_{L^{\frac{n}{n -1}, 1}(\R^n)} \le C \norm{D u}_{L^1 (\R^n)}.
\]
It is not known whether this inequality can be extended to canceling operators \cite{VSVectorL1}*{open problem 8.3} as
\[
  \norm{u}_{L^{\frac{n}{n -1}, 1}(\R^n)} \le C \norm{A(D) u}_{L^1 (\R^n)};
\]
this inequality would be consistent with theorems~\ref{theoremCancelingSobolev},~\ref{theoremCancelingHardy} and~\ref{theoremLinfty}.

One can wonder whether the \emph{ellipticity} is \emph{necessary} in theorem~\ref{theoremCancelingHardy} as it is in theorem~\ref{theoremCancelingSobolev}  \(\ell = 1\) \cite{VSVectorL1}*{proposition 5.1}. In general, this is not the case. However, when \(\ell=1\), the ellipticity is necessary for a scale of Hardy-Sobolev inequalities.

\begin{theorem}
\label{theoremCancelingHardySobolev}
Let \(A(D)\) be a homogeneous linear differential operator of order
\(k\) on \(\R^n\) from \(V\) to \(E\) and let \(\lambda \in [0, 1)\). The estimate
\[
  \Bigl(\int_{\R^n} \frac{\abs{D^{k - 1}u }^\frac{n - \lambda}{n - 1}}{\abs{x}^\lambda}\dif x \Bigr)^\frac{n - 1}{n - \lambda} \le C \int_{\R^n} \abs{A (D) u},
\]
holds for every \(u \in C^\infty_c(\R^n; V)\)
if and only if \(A(D)\) is elliptic and canceling. 
\end{theorem}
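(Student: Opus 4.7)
The plan is to establish the two directions separately; for necessity I would argue ellipticity and canceling in turn.

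\textbf{Sufficiency.} Assume $A(D)$ is elliptic and canceling. I would interpolate Theorem~\ref{theoremCancelingSobolev} and Theorem~\ref{theoremCancelingHardy} (both with $\ell = 1$) by H\"older's inequality. Decomposing
\[
\frac{\abs{D^{k-1}u}^{\frac{n-\lambda}{n-1}}}{\abs{x}^{\lambda}} = \abs{D^{k-1}u}^{\frac{n(1-\lambda)}{n-1}}\cdot\Bigl(\frac{\abs{D^{k-1}u}}{\abs{x}}\Bigr)^{\lambda},
\]
and applying H\"older with conjugate exponents $1/(1-\lambda)$ and $1/\lambda$ (the endpoint $\lambda = 0$ being Theorem~\ref{theoremCancelingSobolev} outright), the first factor is controlled by Theorem~\ref{theoremCancelingSobolev} and the second by Theorem~\ref{theoremCancelingHardy}. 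Their exponents combine as $n(1-\lambda)/(n-1) + \lambda = (n-\lambda)/(n-1)$, giving $\|A(D)u\|_{L^1}^{(n-\lambda)/(n-1)}$ on the right; taking the $(n-1)/(n-\lambda)$-th power closes the estimate.

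\textbf{Necessity of canceling (assuming ellipticity).} Let $e \in \bigcap_{\xi\ne 0}A(\xi)[V]\setminus\{0\}$ and set $v(\xi) = A(\xi)^\dagger e$, where $A(\xi)^\dagger = (A(\xi)^*A(\xi))^{-1}A(\xi)^*$ is smooth and homogeneous of degree $-k$ off the origin by ellipticity. For $\phi \in C^\infty_c(\R^n)$ with $\int\phi \ne 0$, let $\hat u = v\hat\phi$, so that $A(D)u = e\phi$. The Fourier multiplier $(i\xi)^{k-1}v(\xi)$ producing $D^{k-1}u$ is homogeneous of degree $-1$; thus $D^{k-1}u$ is, up to a smooth remainder, the convolution of $\phi$ with a kernel homogeneous of degree $-(n-1)$, and $\abs{D^{k-1}u(x)} \sim c(x/\abs{x})\abs{x}^{-(n-1)}$ as $\abs{x}\to\infty$ with $c$ not identically zero. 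The left-hand integrand then behaves like $\abs{x}^{-n}$ on a cone, giving a logarithmic divergence. A truncation $u \mapsto u\chi_R$ controlled by commutator estimates (uniformly bounded in $R$ via the decay $\abs{D^j u}\lesssim\abs{x}^{-(n-k+j)}$) produces a sequence in $C^\infty_c$ whose left-hand side grows like $\log R$ while $\|A(D)(u\chi_R)\|_{L^1}$ stays bounded, contradicting the inequality.

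\textbf{Necessity of ellipticity.} Suppose $A(\xi_0)v_0 = 0$ with $\xi_0 \ne 0$, $v_0 \ne 0$. I would test against modulated envelopes $u_R(x) = \realpart\bigl(\eta(x)v_0 e^{iR\xi_0\cdot x}\bigr)$ for $\eta\in C^\infty_c$. Using $A(D)(e^{iR\xi_0\cdot x}f) = e^{iR\xi_0\cdot x}A(D+iR\xi_0)f$ and expanding in descending powers of $R$, the principal $O(R^k)$ contribution is killed by $A(\xi_0)v_0 = 0$, leaving $\|A(D)u_R\|_{L^1}\sim R^{k-1}\|B\eta\|_{L^1}$ with $B\eta := \sum_l\partial_{\xi_l}A(\xi_0)v_0\,\partial_l\eta$, while $\|D^{k-1}u_R\|_{L^{(n-\lambda)/(n-1)}(\abs{x}^{-\lambda})}\sim R^{k-1}\|\eta\|_{L^{(n-\lambda)/(n-1)}(\abs{x}^{-\lambda})}$. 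Letting $R \to \infty$ forces the functional estimate
\[
\|\eta\|_{L^{(n-\lambda)/(n-1)}(\abs{x}^{-\lambda})} \lesssim \|B\eta\|_{L^1} \quad\text{for all } \eta \in C^\infty_c(\R^n).
\]
The identity $A(t\xi_0)v_0 \equiv 0$ yields, upon differentiating at $t=1$, that $\xi_0 \in \ker B$, so $d := \dim\ker B\ge 1$. Choose coordinates with $\ker B = \operatorname{span}(e_1,\ldots,e_d)$ and test with $\eta(x) = f(x_1,\ldots,x_d)\,\zeta(L^{-1}(x_{d+1},\ldots,x_n))$ for $L \to \infty$; a direct anisotropic scaling shows the left-hand side outgrows the right-hand side by a factor $L^{d(1-\lambda)/(n-\lambda)}\to\infty$, contradicting the functional estimate. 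This last step is the main technical obstacle: the original inequality is scale-invariant, so a naive isotropic scaling produces matching orders, and the asymmetry must be extracted from an anisotropic dilation along $\ker B$ together with the subprincipal symbol $\partial_\xi A(\xi_0)v_0$.
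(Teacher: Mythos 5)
Your \textbf{sufficiency} argument is exactly what the paper indicates: for $\lambda\in(0,1)$ the estimate follows from Theorems~\ref{theoremCancelingSobolev} and~\ref{theoremCancelingHardy} (both with $\ell=1$) by H\"older's inequality applied to the factorization you wrote, and $\lambda=0$ is Theorem~\ref{theoremCancelingSobolev} itself; the exponents you computed, $n(1-\lambda)/(n-1)+\lambda=(n-\lambda)/(n-1)$, are correct. The paper also notes that proposition~\ref{propositionSufficientHardy}, with $\ell=1$ and $q=(n-\lambda)/(n-1)\in[1,n/(n-1))$, gives a direct proof bypassing Theorem~\ref{theoremCancelingSobolev}.

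Your \textbf{necessity of canceling} is the same underlying mechanism as proposition~\ref{propositionNecessaryCanceling} (the homogeneous Green kernel $G(\cdot)[e]$ is too slowly decaying for the weighted $L^q$ integral to converge unless $G(\cdot)[e]\equiv0$, hence $e=0$), but it has a real gap. You set $\widehat{u}=v\widehat{\phi}$ with $v(\xi)=A(\xi)^\dagger e$ homogeneous of degree $-k$ and $\widehat{\phi}(0)\ne0$; for $k\ge n$ this is not locally integrable near $\xi=0$ and does not define a tempered distribution whose inverse transform is a function, so your truncation $u\chi_R$ is not available. The paper avoids this with $\rho_\lambda=\lambda^n\psi(\lambda\cdot)-\lambda^{-n}\psi(\cdot/\lambda)$, whose Fourier transform vanishes near the origin, producing Schwartz-class $u_\lambda$ bounded in the right norms for every order $k$, and then passes to the limit by Fatou rather than truncating. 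You should also justify that the angular profile $c=|G(\cdot)[e]|$ is not identically zero: this uses $\bigcap_{x\ne0}\ker G(x)=\bigcap_{\xi\ne0}\ker A(\xi)^*$ (lemma~\ref{lemmaGreen}) together with $e\in\bigcap_{\xi\ne0}A(\xi)[V]$ to force $|e|^2=0$.

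Your \textbf{necessity of ellipticity} takes a genuinely different route from proposition~\ref{propositionNecessaryEllipticq}. The paper tests with $u_\lambda(x)=\varphi(x/\lambda)\psi(\xi\cdot x)v$, a "slab" profile: the envelope dilates while the transversal profile is fixed, and a one-shot scaling computation in $\lambda$ yields the contradiction. You instead use a high-frequency modulation $u_R=\realpart(\eta\, v_0\,e^{iR\xi_0\cdot x})$ to extract an auxiliary first-order estimate $\|\eta\|_{L^{(n-\lambda)/(n-1)}(|x|^{-\lambda})}\lesssim\|B\eta\|_{L^1}$ with $B$ the subprincipal symbol, and then exploit $\xi_0\in\ker\sigma_B$ (Euler's identity) via an anisotropic dilation. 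Your final scaling exponent $L^{d(1-\lambda)/(n-\lambda)}$ is correct and the observation that a naive isotropic scaling gives nothing is the right one. The main omission is in passing to the limit $R\to\infty$: both sides of the inequality carry an oscillating factor $|\cos(R\xi_0\cdot x+\theta)|$ inside a weighted integral, and one needs an equidistribution argument (or integration against Fej\'er-type kernels) to identify the limiting constants and show that the one on the left stays bounded away from zero; the paper's construction sidesteps this because its test functions are non-oscillating. With these points filled in, your approach is a valid alternative proof.
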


For \(A (D) = (\Delta, \nabla \Div)\), we recover an inequality of Maz\cprime{}ya \cite{Mazya2010} (see also \cite{BousquetMironescu}). For general \(A (D)\), this result is already known for \(\lambda=0\) \cite{VSVectorL1}. For \(\lambda \in (0, 1)\),
the sufficiency part is a consequence of theorems~\ref{theoremCancelingSobolev} and~\ref{theoremCancelingHardy} by the H\"older inequality. 
Alternatively, it can be proved in a more direct way by using the same arguments as its counterpart in theorem~\ref{theoremCancelingHardy} bypassing the more delicate proof of theorem~\ref{theoremCancelingSobolev}.

In the limiting case \(\lambda=1\) in theorem~\ref{theoremCancelingHardySobolev}, the ellipticity condition is not necessary in theorem~\ref{theoremCancelingHardy}. This phenomenon can already be observed in the scalar case: for every \(u \in C^\infty_c (\R^3)\), one has 
\begin{equation}\label{contrexempleellipticite}
 \int_{\R^3} \frac{\abs{u (x)}}{\abs{x}}\dif x \le C \int_{\R^3} \abs{\partial_1 u} + \abs{\partial_2 u},
\end{equation}
and the operator \((\partial_1, \partial_2)\) is not elliptic.

We give some partial results concerning the Hardy inequality \eqref{hardyinequalityintroduction} when the operator \(A(D)\) is not elliptic. First the cone \[\{\xi \in \R^n \st A(\xi) \text{ is not one-to-one}\}\] should not be too large: for example, it cannot contain a hyperplane.
In particular, the ellipticity condition turns out to be necessary when \(n = 2\).
The general problem of writing necessary and sufficient conditions on \(A (D)\) seems quite difficult, we have however written in theorem~\ref{propositionHardyRankOne} such a condition for an operator \(A (D)\) which is a collection of components of first order derivatives:
\[
 A (D) u (x) = (a_1 \cdot D u (x)[b_1],\dotsc, a_\ell \cdot D u (x) [b_\ell]).
\]

%Write a geometrical counterpart:
%\[
% \int_{\R^n} f \cdot t 
% \le \abs{\Gamma} \int_{\R^n} \abs{x} \abs{D f (x)} \dif x\:, 
%\]

\section{Proof of the Hardy--Sobolev inequality}

The first tool that we shall use is the existence of a Green function for \(A (D)\).

\begin{lemma}
\label{lemmaGreen}
Let \(A(D)\) be a linear differential operator of order \(k\) on \(\R^n\) from \(V\) to \(E\).
If \(A (D)\) is elliptic then there exists a function \(G \in C^\infty \bigl(\R^n \setminus \{0\}; \mathcal{L}(E ; V)\bigr) \cap L^1_\mathrm{loc} \bigl(\R^n; \mathcal{L}(E ; V)\bigr)\) such that for every \(u \in C^\infty_c(\R^n; V)\) and \(x \in \R^n\),
\[
  u (x) = \int_{\R^n} G (x - y) \bigl[A (D) u (y)\bigr] \dif y\:.
\]
Moreover, for every \(\ell \in \N\) such that \(\ell > k-n\), \(D^{\ell} G\) is homogeneous of degree \(k-n-\ell\) and 
\[
 \bigcap_{x \in \R^n \setminus \{0\}} \ker G (x) = \bigcap_{\xi \in \R^n \setminus \{0\}} \ker A (\xi)^*.
\]
\end{lemma}

% Here, \(\mathcal{L}_{k-\ell}(\R^n ; V)\) is the space of \((k-\ell)\)-linear maps from \(\R^n\) into \(V\), and simply \(V\) when \(\ell=k\). 
Here and in the sequel, we endow \(V\) and \(E\) with an inner product denoted by \(\cdot\) and the adjoint is taken with respect to that fixed Euclidean structure.

The restriction \(\ell > k-n\) is essential as it can be observed  when \(n = k = 2\) and \(A (D) = \Delta\). In that case, the Green function \(G\) is not homogeneous of degree \(0\).

We use the following convention to define the Fourier transform of a map \(u\in C^{\infty}_c(\R^n ; V)\):
\[
\widehat{u}(\xi) = \int_{\R^n} u(x)e^{-i \xi \cdot x}\dif x\:;
\]
the map \(\widehat{u}\) takes its values in the \textit{complexified} vector space \(V \otimes \C\).

\begin{proof}[Proof of lemma~\ref{lemmaGreen}]
We define the map \( H : \R^n \setminus \{0\} \to \mathcal{L}(E ; V)\), for every \(\xi \in \R^n \setminus \{0\}\) by
\[
H(\xi) = \bigl( A (\xi)^* \circ A (\xi) \bigr)^{-1} \circ A (\xi)^*.
\]
The map \(H\) is smooth in \(\R^n \setminus \{0\}\) and is homogeneous of degree \(-k\). If \(-k>-n\), then \(H\) belongs to \(L^{1}_\mathrm{loc}(\R^n)\) and defines a  distribution on \(\R^n\). If \(-k\leq -n\), then \(H\) can  be extended as  a distribution on \(\R^n\), still denoted by \(H\), in such a way that for every homogeneous polynomial \(P\) of degree \(\ell > k - n\), \(PH\) is a homogeneous distribution of degree \(\ell - k\) on \(\R^n\) (see for example \cite{HormanderI}*{theorem 3.2.4}). In both cases, \(H\) is a temperate distribution and \(\widehat{H}\in C^{\infty}(\R^n\setminus \{0\})\) \cite{HormanderI}*{theorem 7.1.18}.  

We define the temperate distribution \(G\) on \(\R^n\) by \(\widehat{G} = i^{-k} H\). For every \(\ell \in \N\) such that \(\ell > k - n\), \(\widehat{D^\ell G} = i^{\ell-k} \xi^{\otimes \ell} H\) is homogeneous of degree  \(\ell-k\). Hence, \(D^\ell G\) is homogeneous of degree  \(-n -\ell  + k\) \cite{HormanderI}*{theorem 7.1.16}. Since \(D^{k-n+1} G\) is homogeneous of degree \(-1>-n\), it is locally summable. This implies that \(G\) is locally summable as well: \(G\in L^1_\mathrm{loc} \bigl(\R^n; \mathcal{L}(E ; V)\bigr)\). Finally, for every \(\xi \in \R^n \setminus \{0\}\), \(\widehat{(A(D)u)}(\xi)=i^k A(\xi)\widehat{u}(\xi)\). Hence, \(G\) satisfies the required identity by  definition of  \(H\).
\end{proof}

The proof actually shows that \(D^\ell G\) is locally summable for every \(\ell \in \{0, \dotsc, k -1\}\). Hence, for these values of \(\ell\), the identity \(D^\ell u = (D^\ell G) * [A(D)u]\) which always  holds true in the sense of distributions, can be written in the following form:
\[
D^\ell u(x)=\int_{\R^n} D^\ell G(x-y)[A(D)u(y)]\, dy.
\]

The second ingredient that will be used repeatedly is a duality estimate on \(A (D) u\). 
% and we systematically identify these Euclidean spaces with their duals.

\begin{lemma}
\label{lemmaCompensation}
Let \(A(D)\) be a linear differential operator of order \(k\) on \(\R^n\) from \(V\) to \(E\).
If \(A(D)\) is elliptic and canceling, then there exists \(C \in \R\) and \(m \in \N_*\) such that for every \(u \in C^\infty_c(\R^n; V)\) and every \(\varphi \in C^m (\R^n \setminus \{0\}; E)\) that satisfies for every \(j \in \{0, \dotsc, m \}\), 
\(
\abs{x}^j\abs{D^j\varphi}\in L^{1}_{\mathrm{loc}}(\R^n)
\),
\[
  \Bigl\lvert \int_{\R^n} \varphi \cdot A(D)u \Bigr\rvert \le
C \sum_{j = 1}^m \int_{\R^n} \abs{A(D)u(x)} \,\abs{x}^{j} \abs{D^j \varphi (x)}\dif  x\: .
\]
\end{lemma}
The integer \(m\) that appears in the conclusion depends on \(A(D)\) and not only on its order; a rather pessimistic upper bound for \(m\) is \(2k \dim V\) \cite{VSVectorL1}*{remark 4.1}.

The proof of this lemma is similar to the proof of  \cite{VSVectorL1}*{proposition 8.9}. The idea of the integration by parts already appeared in the context of divergence-free vector fields \citelist{\cite{VS2006BMO}*{lemma 4.5}\cite{BB2007}*{theorem 3}\cite{Mazya2010}*{theorem 2}\cite{BousquetMironescu}}. 

\begin{proof}[Proof of lemma~\ref{lemmaCompensation}]
Since \(A(D)\) is canceling and elliptic there exist a finite dimensional vector space \(F\) and  a homogeneous linear differential operator \(L(D)\) of order \(m \in \N_*\) from \(E\) to \(F\) such that \(L(D) \circ A(D) = 0\) and \(L(D)\) is cocanceling \cite{VSVectorL1}*{proposition 4.2}, that is,
\[
  \bigcap_{\xi \in \R^n \setminus \{0\}} \ker L (\xi) = \{0\}\:.
\]
Writing \(L(D) = \sum_{\abs{\alpha} = m} L_\alpha \partial^\alpha\), by classical properties of linear operators, there exist \(K_\alpha \in \Lin (F; E)\) for \(\alpha \in \N^n\) with \(\abs{\alpha} = m\) such that 
\begin{equation}
\label{eqKL}
  \sum_{\abs{\alpha} = m} K_\alpha \circ L_\alpha = \id_E
\end{equation}
(a detailed proof has been given in \cite{VSVectorL1}*{lemma 2.5}).
We define now, the homogeneous polynomial \(P : \R^n \to \Lin (E; F)\) of degree \(m\) for \(x \in \R^n\) by
\[
  P (x) = \sum_{\abs{\alpha} = m} \frac{ x^\alpha }{\alpha !} K_\alpha^*\:.
\]
By the identity \eqref{eqKL}, we compute
\[
 L(D)^* (P)= \sum_{\abs{\alpha} = m} L_\alpha{}^* \circ \partial^\alpha P = \sum_{\abs{\alpha} = m} L_\alpha{}^* \circ K_\alpha{}^*  = \id_E{}^* = \id_E.
\]

Let \(\varphi \in C^m (\R^n \setminus \{0\}; E)\) such that for every \(j \in \{0, \dotsc, m \}\), 
\(\abs{x}^j\abs{D^j\varphi}\in L^{1}_{loc}(\R^n).\) Since 
\[
\abs{L(D)^*(P[\varphi])} \leq C \sum_{j=0}^{m} \abs{x}^j\abs{D^j\varphi(x)},
\] 
this implies \(L(D)^*(P[\varphi])\in L^{1}_{\textrm{loc}}(\R^n ; E)\).
For every \(u \in C^\infty_c(\R^n; V)\) since \(L(D)[A(D)u] = 0\), we get by integration by parts
\[
\begin{split}
 \int_{\R^n} \varphi \cdot A(D)u &= \int_{\R^n} A(D)u \cdot (L(D)^*(P)) [\varphi]\\
&=\int_{\R^n} A(D)u \cdot \bigl( (L(D)^* (P)) [\varphi] - L(D)^*(P[\varphi])\bigr)\:.
\end{split}
\]
In order to conclude, we note that there exists \(C > 0\) such that for every \(x \in \R^n\),
\[
  \bigabs{\bigl( (L(D)^* P) [\varphi(x)] - L(D)^*(P[\varphi])(x)\bigr)} \le
C\Bigl(\sum_{j = 1}^m \abs{x}^{j} \abs{D^j \varphi (x)}\Bigr)\:.\qedhere
\]
\end{proof}

\begin{proof}[Proof of theorem~\ref{theoremLinfty}]
If \(G\) is the Green function given by lemma~\ref{lemmaGreen}, we have for every \(x \in \R^n\),
\[
 D^{k-n} u (x) = \int_{\R^n} D^{k-n} G (x - y)\bigl[A (D)u (y)\bigr]\dif y.
\]
Since \(D^{k-n+j} G\) is homogeneous of degree \(-j\) for every \(j \in \N_*\), we conclude by lemma \ref{lemmaCompensation} that 
\[
 \abs{D^{k-n} u (x)} \le C \int_{\R^n} \abs{A (D)u}.\qedhere
\]
\end{proof}

We prove with the same tools the sufficiency part of theorem~\ref{theoremCancelingHardy} and theorem~\ref{theoremCancelingHardySobolev}. The following proposition gives in fact a more general result:

\begin{proposition}
\label{propositionSufficientHardy}
Let \(A(D)\) be a linear differential operator of order \(k\) on \(\R^n\) from \(V\) to \(E\) and \(\ell \in \{1, \dotsc, \min(k,  n - 1) \}\).
If \(A(D)\) is elliptic and canceling, then for every \(u \in C^\infty_c(\R^n; V)\) and \(q \in [1, \frac{n}{n - \ell})\), there exists \(C\in \R\) such that
\[
  \Bigl( \int_{\R^n} \frac{\abs{D^{k - \ell} u (x)}^q}{\abs{x}^{n - (n - \ell) q}}\dif x \Bigr)^{\frac{1}{q}} \le C\int_{\R^n} \abs{A(D)u}.
\]
\end{proposition}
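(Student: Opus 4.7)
The plan is to combine a representation formula obtained from the ellipticity of $A(D)$ with a dyadic decomposition whose success rests on the canceling hypothesis. Since $A(D)$ is elliptic, Cramer's rule applied to $A(\xi)^*A(\xi)$ produces a homogeneous polynomial operator $B(D)$ of order $2m-k$ satisfying $B(D)A(D) = (-\Delta)^m \id_V$ for some $m\in\N$. Letting $G_m$ be the fundamental solution of $(-\Delta)^m$, one obtains the pointwise representation
\[
 D^{k-\ell}u(x) = \int_{\R^n} K(x-y)\, A(D)u(y)\dif y,
\]
with $K = D^{k-\ell} B(D) G_m$ smooth on $\R^n\setminus\{0\}$ and positively homogeneous of degree $\ell-n$; in particular $\abs{K(x)} \le C\abs{x}^{\ell-n}$.

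The second step uses the cancellation to refine $K$. The polynomial $B(D)$ is determined only modulo homogeneous operators whose symbol annihilates $A(\xi)[V]$ for every $\xi\in\R^n\setminus\{0\}$. The canceling condition $\bigcap_{\xi\ne 0}A(\xi)[V] = \{0\}$ is the algebraic ingredient that, via the symbol calculus of \cite{VSVectorL1} combined with the explicit structure of the fundamental solution from \cite{HormanderI}, permits a choice of $B(D)$ for which $K$ has vanishing spherical mean,
\[
 \int_{S^{n-1}} K(\omega)\dif\sigma(\omega) = 0.
\]
This is the refinement on which the analytic estimate hinges.

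The analytic estimate follows the strategy of Bousquet--Mironescu \cite{BousquetMironescu}. I would decompose $A(D)u = \sum_j f_j$ along a dyadic partition of $\R^n$ by annuli $\{\abs{y}\sim 2^j\}$, and for each $j$ split the $x$-integration into a near regime $\abs{x}\sim 2^j$ and two far regimes $\abs{x}\ll 2^j$, $\abs{x}\gg 2^j$. The sphere mean-zero property lets one replace $K(x-y)$ by the first-order difference $K(x-y)-K(\pm x)$ in the far regimes, producing a gain of $\min(\abs{y},\abs{x})/\max(\abs{y},\abs{x})$ which cancels the logarithmic divergence of the naive integral; on the near regime the weight $\abs{x}^{-n+(n-\ell)q}$ is locally integrable as long as $q<n/(n-\ell)$, so the standard Riesz potential bound suffices. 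Summing the resulting geometric series in $j$ and applying Minkowski's inequality in the weighted $L^q$ norm delivers the estimate, with constants blowing up as $q\to n/(n-\ell)$.

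The main obstacle is the second step: extracting the vanishing spherical mean of $K$ from the abstract canceling hypothesis requires combining the symbol calculus from \cite{VSVectorL1} with sharp pointwise expansions of the fundamental solution $G_m$; once this is available, the subsequent dyadic analysis is technical but essentially classical, the delicate point being the precise balance between the Taylor gain at far scales and the weight $\abs{x}^{-n+(n-\ell)q}$ that ensures convergence for every $q \in [1, n/(n-\ell))$.
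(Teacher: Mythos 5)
Your proposal shares with the paper the first step---an elliptic representation kernel $K$, smooth away from the origin and homogeneous of degree $\ell-n$---but the second step, and hence the entire analytic estimate, rests on a misreading of what the cancellation provides. You claim the canceling hypothesis permits choosing $K$ so that $\int_{S^{n-1}} K(\omega)\,\dif\sigma(\omega)=0$, and that this allows you to replace $K(x-y)$ by $K(x-y)-K(\pm x)$ in the far regimes. That replacement is not a consequence of the spherical mean of $K$ vanishing: the identity $\int K(x-y)f_j(y)\,\dif y=\int\bigl(K(x-y)-K(x)\bigr)f_j(y)\,\dif y$ requires $\int f_j=0$, i.e.\ a cancellation of the \emph{data} $A(D)u$ on each dyadic annulus, not a cancellation of the \emph{kernel}. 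Since $A(D)u$ has no such vanishing annular means, the term $K(x)\int f_j$ survives, and its contribution $\int_{\abs{x}\gg 2^j}\abs{K(x)}^q\abs{x}^{(n-\ell)q-n}\,\dif x$ is of the order $\int_{\abs{x}\gg 2^j}\abs{x}^{-n}\,\dif x$, which is logarithmically divergent for every $q$, regardless of the spherical mean of $K$. Your dyadic argument therefore does not close, and the obstacle you flag at the end is not merely a technical point to be supplied---it is the wrong mechanism.

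The paper does not use the spherical mean of $K$ at all. The cancellation enters through Lemma~\ref{lemmaCompensation}: since $A(D)$ is canceling, there is a cocanceling operator $L(D)$ with $L(D)A(D)=0$, and integration by parts gives, for every test function $\varphi$,
\[
\Bigabs{\int_{\R^n}\varphi\cdot A(D)u}\le C\sum_{j=1}^m\int_{\R^n}\abs{A(D)u(y)}\,\abs{y}^j\,\abs{D^j\varphi(y)}\,\dif y,
\]
which expresses precisely the ``mean-zero'' behaviour of $A(D)u$ against smooth test functions that your argument needs but cannot produce. The paper then decomposes the \emph{kernel} rather than the data, writing $G(x-y)=G_1(x,y)+G_2(x,y)$ with $G_1(x,y)=\rho(y/\abs{x})\,G(x)$; applying the compensation lemma with $\varphi(y)=\rho(y/\abs{x})$ produces the gain $\abs{y}/\abs{x}$ from the rescaled derivatives of $\rho$, and this is exactly what makes the far-field contribution summable. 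The remaining piece $G_2$ is handled by direct kernel estimates and Minkowski's inequality, analogously to what you do in the near regime. If you wish to salvage your approach, replace the spherical-mean claim by a compensation estimate of this type; that is where the canceling hypothesis is actually used.
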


\begin{proof}
Let \(G\) be the Green function given by lemma~\ref{lemmaGreen}. We choose \(\rho \in C^\infty_c(\R^n)\) such that \(\rho = 1\) on \(B_{1/4}\) and \(\supp \rho \subset B_{1/2}\), and we define the kernels \(H\) and \(K\) for \(x, y \in (\R^n \setminus \{0\})  \times \R^n \) with \(x \ne y\) by
\[
 H (x, y) = \rho \Bigl( \frac{y}{\abs{x}} \Bigr) D^{k - \ell} G (x)
\]
and
\[
 K (x, y) = D^{k - \ell} G(x - y) - \rho \Bigl( \frac{y}{\abs{x}} \Bigr) D^{k - \ell} G (x)\:.
\]

By lemma~\ref{lemmaCompensation} and the homogeneity of \(D^{k - \ell} G\), we have
\begin{multline*}
 \biggl(\int_{\R^n} \Bigl\lvert\int_{\R^n}  H (x, y) [A (D) u (y)] \dif y\Bigr\rvert^q \frac{1}{\abs{x}^{n - (n - \ell) q}} \dif x \biggr)^\frac{1}{q}\\
=\biggl(\int_{\R^n} \Bigl\lvert\int_{\R^n}  \rho \Bigl( \frac{y}{\abs{x}} \Bigr) A (D) u (y) \dif y\Bigr\rvert^q \frac{\abs{D^{k - \ell} G(x)}^q}{\abs{x}^{n - (n - \ell) q}} \dif x \biggr)^\frac{1}{q}\\
\le C \biggl(\int_{\R^n} \Bigl\lvert\int_{B_{\abs{x}/2}} \frac{\abs{y}}{\abs{x}} \abs{A (D) u (y)} \dif y\Bigr\rvert^q \frac{1}{\abs{x}^{n}} \dif x \biggr)^\frac{1}{q}.
\end{multline*}
By the Minkowski inequality (see for example \cite{LiebLoss}*{theorem 2.4}), we get
\begin{multline*}
\biggl(\int_{\R^n} \Bigl\lvert\int_{B_{\abs{x}/2}} \frac{\abs{y}}{\abs{x}} \abs{A (D) u (y)} \dif y\Bigr\rvert^q \frac{1}{\abs{x}^{n}} \dif x \biggr)^\frac{1}{q}\\
\le \int_{\R^n} \abs{y} \abs{A (D) u (y)}\Bigl(\int_{\R^n  \setminus B_{2 \abs{y}}} \frac{1}{\abs{x}^{n + q}} \dif x \biggr)^\frac{1}{q}\dif y
\le C' \int_{\R^n} \abs{A (D) u}\:.
\end{multline*}

For the kernel \(K\), by the Minkowski inequality again, 
\begin{multline*}
 \biggl(\int_{\R^n} \Bigl\lvert\int_{\R^n}  K (x, y) \bigl[A (D) u (y)\bigr] \dif y\Bigr\rvert^q \frac{1}{\abs{x}^{n - (n - \ell) q}} \dif x \biggr)^\frac{1}{q}\\
 \le  \int_{\R^n} \Bigl(\int_{\R^n}  \frac{\abs{K (x, y)}^q}{\abs{x}^{n - (n - \ell) q}} \dif x\Bigr)^\frac{1}{q} \abs{A (D) u (y)}\dif y.
\end{multline*}
If \(x\not=0\), since \(K(x, \cdot)\) is continuously differentiable on \(B_{\abs{x}/2}\) and \(D^{k - \ell} G\) is homogeneous of degree \(-(n - \ell)\), if \(\abs{x} \ge 2 \abs{y}\)
\[
 \abs{K (x, y)} \le C\frac{\abs{y}}{\abs{x}^{n - \ell + 1} }\:;
\]
while if \(\abs{x} < 2 \abs{y}\),
\[
 \abs{K (x, y)} \le C\frac{1}{\abs{x-y}^{n - \ell} }\:.
\]
Therefore, since \(q < \frac{n}{n - \ell}\),
\begin{multline*}
 \int_{\R^n}  \frac{\abs{K (x, y)}^q}{\abs{x}^{n - (n - \ell) q}} \dif x\\
\le C \Bigl(\int_{B_{2 \abs{y}}} \frac{1}{\abs{x - y}^{(n - \ell) q}\abs{x}^{n - (n - \ell) q}} \dif x  + \int_{\R^n \setminus B_{2 \abs{y}}} \ \frac{\abs{y}^q}{\abs{x}^{n + q}} \dif x\Bigr) \le C'\:.
\end{multline*}
We conclude that 
\begin{equation*}
 \biggl(\int_{\R^n} \Bigl\lvert\int_{\R^n} K (x, y) \bigl[A (D) u (y)\bigr] \dif y\Bigr\rvert^q \frac{1}{\abs{x}^{n - (n - \ell) q}} \dif x \biggr)^\frac{1}{q} \le C'\int_{\R^n} \abs{A (D) u }\:.
\end{equation*}
This completes the proof of the proposition.
\end{proof}

We end this section with an alternate  proof of theorem~\ref{theoremLinfty}.

\begin{proof}[Proof of theorem~\ref{theoremLinfty} by proposition~\ref{propositionSufficientHardy}]
By proposition~\ref{propositionSufficientHardy}, there exists \(C>0\) such that for every \(u\in C^{\infty}_c(\R^n ; V)\),
\[
 \int_{\R^n} \frac{\abs{D^{k - n + 1} u (x)}}{\abs{x}^{n-1}}\dif x \le C \int_{\R^n} \abs{A (D) u}.
\]
On the other hand, we have the classical estimate (see for example \cite{Stein1970}*{\S 2.3 (18)})
\begin{equation}\label{eqlinftyder}
  \abs{D^{k - n} u(0)} \le C'\int_{\R^n} \frac{\abs{D^{k - n + 1} u (x)}}{\abs{x}^{n-1}}\dif x,
\end{equation}
which follows from the integration over \(\theta\in \mathbf{S}^{n-1}\), of the inequality
\[
\abs{D^{k-n}u(0)}= \Bigabs{\int_{\R^+} \frac{d}{dr}\bigl( D^{k-n}u( r \theta)\bigr) \dif r}\leq \int_{\R^+}\abs{D^{k-n+1}u(r\theta)} \dif r.
\]
Therefore, we have
\[
  \abs{D^{k - n} u(0)} \le C'' \int_{\R^n} \abs{A (D) u}.
\]
Since this estimate is invariant under translation, the conclusion follows.
\end{proof}

\section{Necessity of the cancellation condition}

In this section, we prove that if the Hardy inequality holds true for an elliptic operator \(A(D)\), then  \(A(D)\) is canceling.

\begin{proposition}
\label{propositionNecessaryCanceling}
Let \(A(D)\) be a linear differential operator of order \(k\) on \(\R^n\) from \(V\) to \(E\), \(\ell \in \{1, \dotsc, \min (k, n - 1)\}\) and \(q \in [1, \frac{n}{n - \ell}]\).
If \(A(D)\) is elliptic and if  there exists \(C \in \R\) such that  for every \(u \in C^\infty_c(\R^n; V)\)
\begin{equation*}
  \Bigl( \int_{\R^n} \frac{\abs{D^{k - \ell} u (x)}^q}{\abs{x}^{n - (n - \ell) q}}\dif x \Bigr)^\frac{1}{q} \le C\int_{\R^n} \abs{A(D)u}\:,
\end{equation*}
then \(A (D)\) is cancelling.
\end{proposition}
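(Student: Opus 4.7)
The plan is to argue by contradiction: assume \(A(D)\) is elliptic, that the hypothesized Hardy--Sobolev inequality holds, and yet \(A(D)\) is not canceling, so that some \(e \in \bigcap_{\xi \in \R^n \setminus \{0\}} A(\xi)[V]\) has \(\abs{e} = 1\). I will construct a family of test functions \(u_{\varepsilon,R} \in C^\infty_c(\R^n ; V)\), indexed by parameters \(\varepsilon \to 0\) and \(R \to \infty\), whose right-hand side stays uniformly bounded while whose left-hand side diverges. The building block is a tempered distribution \(\Phi\) with \(A(D) \Phi = e \delta_0\). Mimicking the proof of lemma~\ref{lemmaGreen}, set
\[
  \widehat{\Phi}(\xi) := i^{-k} \bigl( A(\xi)^* A(\xi) \bigr)^{-1} A(\xi)^*[e],
\]
which is smooth and homogeneous of degree \(-k\) on \(\R^n \setminus \{0\}\). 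Since \(e \in A(\xi)[V]\) for every \(\xi \neq 0\), the orthogonal projection \(A(\xi) (A(\xi)^* A(\xi))^{-1} A(\xi)^*\) fixes \(e\), so \(A(\xi) \widehat{\Phi}(\xi) = i^{-k} e\) and therefore \(A(D) \Phi = e \delta_0\). Combined with the Fourier construction of \(G\), this yields \(D^{k - \ell} \Phi(x) = G(x) e\) for \(x \in \R^n \setminus \{0\}\). Fix \(\varphi \in C^\infty_c(B_1)\) with \(\int \varphi = 1\), set \(\varphi_\varepsilon(x) := \varepsilon^{-n} \varphi(x/\varepsilon)\), take a cutoff \(\eta_R \in C^\infty_c(B_{2R})\) equal to \(1\) on \(B_R\) with \(\abs{D^j \eta_R} \leq C_j R^{-j}\), and define \(u_{\varepsilon,R} := \eta_R (\Phi \ast \varphi_\varepsilon)\).

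By the Leibniz rule,
\[
  A(D) u_{\varepsilon, R} = \eta_R \, e \varphi_\varepsilon + \sum_{j = 1}^{k} T_j \bigl( D^j \eta_R, D^{k - j} (\Phi \ast \varphi_\varepsilon) \bigr),
\]
with each commutator supported in \(\{R \leq \abs{x} \leq 2R\}\). When \(k < n\), \(D^{k - j} \Phi\) is homogeneous of degree \(j - n\); combining with \(\abs{D^j \eta_R} \leq C_j R^{-j}\) and the annulus volume \(\sim R^n\), each commutator contributes \(L^1\)-mass \(R^{-j} \cdot R^{j - n} \cdot R^n = O(1)\), while the principal term has mass \(\abs{e} = 1\). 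Hence \(\int \abs{A(D) u_{\varepsilon,R}} \leq C\) independently of \(\varepsilon\) and \(R\). On the other hand, for every fixed \(x \neq 0\), once \(\varepsilon\) is small enough and \(R\) large enough,
\[
  D^{k - \ell} u_{\varepsilon, R}(x) = (G e) \ast \varphi_\varepsilon (x) \to G(x) e.
\]
Fatou's lemma and the polar-coordinate identity
\[
  \int_{\R^n} \frac{\abs{G(x) e}^q}{\abs{x}^{n - (n - \ell) q}} \dif x = \Bigl( \int_0^{\infty} \frac{\dif r}{r} \Bigr) \int_{\mathbf{S}^{n - 1}} \abs{G(\theta) e}^q \dif \theta,
\]
granted by the homogeneity of degree \(\ell - n\) of \(G\), then drive the left-hand side to \(+ \infty\), provided the sphere integral is strictly positive. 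This positivity is ensured by the kernel identity \(\bigcap_{x \neq 0} \ker G(x) = \bigcap_{\xi \neq 0} \ker A(\xi)^*\) of lemma~\ref{lemmaGreen}: since \(e \in A(\xi)[V] = (\ker A(\xi)^*)^\perp\) and \(\abs{e} = 1\), the inclusion \(e \in \ker A(\xi)^*\) would force \(\abs{e}^2 = 0\); so \(G(\cdot) e \not\equiv 0\) on \(\mathbf{S}^{n - 1}\) and the forbidden comparison \(+ \infty \leq C\) supplies the desired contradiction.

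The main obstacle is the uniform commutator estimate, which is transparent only when \(k < n\), where \(\Phi\) is cleanly homogeneous of degree \(k - n \in (-n, 0)\) with polynomial decay of each derivative at infinity. When \(k \geq n\), \(\Phi\) may carry logarithmic or polynomial growth, so one first regularizes: for instance, replacing \(\widehat{\Phi}\) by \((1 - \chi) \widehat{\Phi}\) for a smooth cutoff \(\chi\) near the origin of frequency space perturbs \(A(D) \Phi\) only by a Schwartz error (absorbable into the right-hand side) while leaving \(D^{k - \ell} \Phi = G e\) intact up to smooth corrections, so the same conclusion is reached.
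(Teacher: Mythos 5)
Your argument is correct in spirit and takes a genuinely different route from the paper's, though it is only complete in the range \(k < n\). Both proofs share the same skeleton: produce a family of admissible test functions \(u\) for which \(\int_{\R^n}\abs{A(D)u}\) stays uniformly bounded while \(D^{k-\ell}u\) converges pointwise on \(\R^n\setminus\{0\}\) to \(G(\cdot)[e]\), with \(G\) the kernel of lemma~\ref{lemmaGreen} and \(e\) a putative nonzero element of \(\bigcap_{\xi\neq 0}A(\xi)[V]\); then Fatou's lemma together with the exact \(\abs{x}^{-n}\) homogeneity of the weighted integrand forces \(G(\cdot)[e]\equiv 0\); finally the kernel identity \(\bigcap_{x\neq 0}\ker G(x)=\bigcap_{\xi\neq 0}\ker A(\xi)^*\) combined with \(A(\xi)[V]=(\ker A(\xi)^*)^\perp\) gives \(e=0\). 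The difference lies in how the test functions are built. The paper takes a Littlewood--Paley type family \(\rho_\lambda=\lambda^n\psi(\lambda\cdot)-\lambda^{-n}\psi(\cdot/\lambda)\) with \(\widehat\psi\equiv 1\) near the origin, so that \(\widehat{\rho_\lambda}\) vanishes near \(0\); this makes \(\widehat{u_\lambda}(\xi)=(-i)^k\widehat{\rho_\lambda}(\xi)\bigl(A(\xi)^*A(\xi)\bigr)^{-1}A(\xi)^*[e]\) a Schwartz function for \emph{every} \(k\), with \(A(D)u_\lambda=\rho_\lambda e\) exactly and no cutoff error, and the technical work goes into the pointwise limit \(G\ast\rho_\lambda\to G\). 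You instead work in physical space, mollifying the fundamental solution \(\Phi\) of \(A(D)\Phi=e\,\delta_0\) and truncating at scale \(R\); the price is a commutator error supported in a shell of radius \(\sim R\), whose \(L^1\) mass is \(O(1)\) by the homogeneity of \(D^{k-j}\Phi\), and the benefit is that the argument is entirely elementary and local.

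The gap is the case \(k\geq n\), which you identify but do not resolve. There \(\widehat\Phi\) has degree \(-k\leq -n\) and does not define a tempered distribution, and the proposed replacement \(\bigl((1-\chi)\widehat\Phi\bigr)^\vee\) is not a cosmetic fix: the regularized kernel is no longer homogeneous, the decay of \(D^{k-j}\Phi_{\mathrm{reg}}\) on the annulus \(\abs{x}\sim R\) requires an asymptotic expansion of the Fourier transform of a compactly supported function with a power-type singularity at the origin (the leading homogeneous terms actually cancel, so the decay improves by one power of \(R\)), and the pointwise limit of \(D^{k-\ell}u\) becomes \(G[e]-(\chi\widehat{G[e]})^\vee\) rather than \(G[e]\). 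All of this can be pushed through — the divergence of the Fatou integral still comes from the homogeneous singularity of \(G[e]\) at the origin, where the correction \((\chi\widehat{G[e]})^\vee\) is bounded — but as written your proof only closes for \(k<n\). This is precisely why the paper builds the low-frequency cutoff into \(\rho_\lambda\) from the outset rather than fixing it afterwards: that choice removes the dichotomy between \(k<n\) and \(k\geq n\) entirely.
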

\begin{proof}
The proof follows the lines of the counterpart of the proposition for the Sobolev inequality \cite{VSVectorL1}*{proposition 5.5}. 
Let \(e\in \bigcap_{\xi \in \R^n \setminus \{0\}} A (\xi)[V] \). 
Let \(\psi\) be in the Schwartz class \(\mathcal{S}(\R^n)\) of rapidly decaying smooth functions be such that \(\widehat{\psi}=1\) on a neighborhood of \(0\) and define the family \((\rho_{\lambda})_{\lambda\geq 1}\) in \(\mathcal{S}(\R^n)\) for \(\lambda \ge 1\) and \(x \in \R^n\) by
\[
\rho_{\lambda}(x)=\lambda^n \psi(\lambda x)-\frac{1}{\lambda^n}\psi\Bigl(\frac{x}{\lambda}\Bigr).
\]
The family \((\rho_{\lambda})_{\lambda\geq 1}\) is bounded uniformly in \(L^{1}(\R^n)\) and for every \(\lambda \geq 1\), \(\widehat{\rho_{\lambda}}\) vanishes on a neighborhood of \(0\).

We then define a sequence \((u_{\lambda})_{\lambda\geq 1}\) in \(\mathcal{S}(\R^n,V)\) in such a way that for every \(\lambda \ge 1\) and \(\xi \in \R^n\),
\[
\widehat{u_{\lambda}}(\xi)=(-i)^k\widehat{\rho_{\lambda}}(\xi) \bigl(A(\xi)^*\circ A(\xi)\bigr)^{-1}\bigl[A(\xi)^{*}[e]\bigr].
\]
Since \(A (D)\) is elliptic and homogeneous of order \(k\), \(u_{\lambda}\) is well defined as an element of \(\mathcal{S}(\R^n,V)\) and moreover, since \(e \in \bigcap_{\xi \in \R^n \setminus \{0\}} A (\xi)[V]\), \(A(D)u_{\lambda}=\rho_{\lambda}e\). 
By a classical approximation argument and our assumption, we have for every \(\lambda \ge 1\),
\begin{equation*}
\Bigl( \int_{\R^n} \frac{\abs{D^{k - \ell} u_\lambda (x)}^q}{\abs{x}^{n - (n - \ell) q}}\dif x \Bigr)^\frac{1}{q} \le C\int_{\R^n} \abs{A(D)u_\lambda}\:.
\end{equation*}
If \(G\) is the Green function given by lemma~\ref{lemmaGreen}, this reads as 
\begin{equation}
\label{ineqrho}
\Bigl( \int_{\R^n} \frac{\abs{(D^{k - \ell} G \ast \rho_\lambda)(x)[e]}^q}{\abs{x}^{n - (n - \ell) q}}\dif x \Bigr)^\frac{1}{q} \le C\int_{\R^n} \abs{\rho_\lambda e}=C'\:.
\end{equation}

We claim that for every \(x\in \R^n\setminus \{0\}\),
\(\lim_{\lambda \to \infty} (D^{k - \ell} G\ast \rho_{\lambda})(x) = D^{k - \ell} G(x)\). 
Indeed, for every \(\lambda \ge 1\) and \(x\in \R^n\setminus \{0\},\) we write
\begin{multline}\label{eqglg}
D^{k - \ell} G*\rho_{\lambda}(x)-D^{k - \ell} G(x)\\
= \int_{\R^n}\bigl(D^{k - \ell} G(x-y)-D^{k - \ell} G(x)\bigr) \lambda^n \psi(\lambda y) \dif y - \int_{\R^n} D^{k - \ell} G(x-y) \frac{1}{\lambda^n} \psi\Bigl(\frac{y}{\lambda}\Bigr). 
\end{multline}

Since \(D^{k-\ell} G\) is smooth on \( \R^n \setminus \{0\}\) and homogeneous of degree \(-(n-\ell),\) there exists \(C>0\) such that for every \(y\in B_{\abs{x}/2},\) 
\[
\abs{D^{k - \ell} G(x-y)-D^{k - \ell} G(x)}\leq \frac{C\abs{y}}{\abs{x}^{n-\ell+1}}.
\]
Together with the fact that \(\psi\) belongs to \(\mathcal{S}(\R^n),\) this implies that for every \(\alpha \in (0, n+1)\),
\begin{multline*}
\Bigabs{\int_{\R^n}\bigl(D^{k - \ell} G(x-y)-D^{k - \ell} G(x)\bigr) \lambda^n \psi(\lambda y) \dif y }\\
\leq C'_\alpha  \int_{B_{\abs{x}/2}} \frac{\abs{y}}{\abs{x}^{n-\ell+1}} \frac{\lambda^n}{\lambda^\alpha \abs{y}^\alpha }\dif y \\ + C'_\alpha \int_{\R^n \setminus B_{\abs{x}/2}} \Bigl(\frac{1}{\abs{x-y}^{n-\ell}}+\frac{1}{\abs{x}^{n-\ell}}\Bigr)\frac{\lambda^n}{\lambda^{\alpha}\abs{y}^{\alpha}}\dif y.
\end{multline*}
This gives if \(\alpha > n\), 
\[
\Bigabs{\int_{\R^n}\bigl(D^{k - \ell} G(x-y)-D^{k - \ell} G(x)\bigr) \lambda^n \psi(\lambda y) \dif y } \leq \frac{C''}{\lambda^{\alpha - n}\abs{x}^{\alpha - \ell}}.
\]
It follows that the first term in the right hand side of \eqref{eqglg} converges to \(0\). In order to estimate the second term, we pick \(\alpha \in (\ell, n)\) and write
\[
\begin{split}
\Bigabs{\int_{\R^n}D^{k - \ell} G(x-y) \frac{1}{\lambda^n} \psi\Bigl(\frac{y}{\lambda}\Bigr) \dif y } &\leq  C'_\alpha \int_{\R^n} \frac{1}{\abs{x-y}^{n-\ell}}\frac{1}{\lambda^n} \frac{\lambda^\alpha}{\abs{y}^\alpha}\dif y\\
&\leq C' \frac{1}{\lambda^{n - \alpha}\abs{x}^{\alpha-\ell}}.
\end{split}
\]
This completes the proof of the fact that \(D^{k - \ell} G * \rho_{\lambda}\) converges pointwisely to \(D^{k - \ell} G\) on \(\R^n\setminus \{0\}\). 

By letting \(\lambda\to\infty\) in \eqref{ineqrho}, we get by Fatou's Lemma
\[
\Bigl( \int_{\R^n} \frac{\abs{D^{k - \ell} G (x)[e]}^q}{\abs{x}^{n - (n - \ell) q}}\dif x \Bigr)^\frac{1}{q} <\infty.
\]
Since \(D^{k - \ell} G\) is homogeneous of degree \(-(n - \ell)\), this implies that \(D^{k - \ell} G(x)[e] = 0\) for every \(x\ne 0\). In view of the properties of \(D^{k - \ell} G\), we thus have \(e\in \bigcap_{\xi \in \R^n \setminus \{0\}} \ker A(\xi)^{*}\). Since \(e \in \bigcap_{\xi \in \R^n \setminus \{0\}} A (\xi)[V]\), we conclude that \(e=0\) and this completes the proof of proposition~\ref{propositionNecessaryCanceling}.
\end{proof}

\section{Necessity of  the ellipticity condition}

\subsection{The Hardy--Sobolev inequality}
The necessity of the ellipticity condition in theorem~\ref{theoremCancelingHardySobolev} for the inhomogeneous inequality corresponds to the case \(p=1\) in the following proposition.
\begin{proposition}
\label{propositionNecessaryEllipticq}
Let \(A(D)\) be a linear differential operator of order \(k\) on \(\R^n\) from \(V\) to \(E\), \(p \in [1, n)\) and \(q \in (p, \frac{np}{n - p}]\).
If there exists \(C \in \R\) such that  for every \(u \in C^\infty_c(\R^n; V)\),
\[
  \Bigl( \int_{\R^n} \frac{\abs{D^{k - 1} u (x)}^q}{\abs{x}^{n - (\frac{n}{p} - 1) q }}\dif x \Bigr)^\frac{p}{q} \le C\int_{\R^n} \abs{A(D)u}^p\:,
\]
then \(A (D)\) is elliptic.
\end{proposition}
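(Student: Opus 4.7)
The plan is to argue by contradiction: if \(A(D)\) is not elliptic, pick \(\xi_0\in\R^n\setminus\{0\}\) and \(v_0\in V\setminus\{0\}\) with \(A(\xi_0)v_0=0\), and construct a two-parameter family of oscillatory test functions violating the inequality. The strict gap \(q>p\) will be the key lever.

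First I would reduce to \(\xi_0=e_n\) by an orthogonal change of variables. An orthogonal map \(O\) with \(Oe_n=\xi_0/\abs{\xi_0}\) conjugates \(A(D_x)\) into the operator with symbol \(\tilde A(\eta)=A(O\eta)\), which vanishes at \(e_n\) on \(v_0\); since the weight \(\abs{x}\), the Lebesgue measure and the Euclidean norms on multilinear maps are orthogonal-invariant, the hypothesized inequality for \(A(D)\) is equivalent to the same inequality for \(\tilde A(D)\) acting on \(\tilde u(y):=u(Oy)\). Writing \(x=(x',x_n)\in\R^{n-1}\times\R\) and picking \(\chi\in C^{\infty}_c(\R^{n-1})\) supported in \(\{1\le\abs{x'}\le 2\}\) together with \(\phi\in C^{\infty}_c(\R)\) supported in \([-1,1]\), I would take
\[
u_{\lambda,\mu}(x',x_n):=v_0\,\chi(x'/\mu)\,\phi(x_n)\cos(\lambda x_n),\qquad \lambda,\mu\to+\infty,
\]
so that \(u_{\lambda,\mu}\) is smooth, \(V\)-valued, compactly supported in \(\{\mu\le\abs{x'}\le 2\mu,\ \abs{x_n}\le 1\}\), on which \(\abs{x}\simeq\mu\).

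Second I would expand \(A(D)u_{\lambda,\mu}\) using that derivatives decouple: \(\partial^\alpha u_{\lambda,\mu}=v_0\,\partial^{\alpha'}[\chi(\cdot/\mu)]\,\partial_n^{\alpha_n}[\phi\cos(\lambda x_n)]\), with top \(\lambda\)-part of the second factor equal to \(\lambda^{\alpha_n}\phi\cos(\lambda x_n+\alpha_n\pi/2)\). The only \(\lambda^k\)-contribution in \(A(D)u_{\lambda,\mu}\) comes from \(\alpha=(0,\dots,0,k)\) with prefactor \(A(e_n)v_0=0\); the surviving main term has size \(\lambda^{k-1}/\mu\), giving
\[
\int_{\R^n}\abs{A(D)u_{\lambda,\mu}}^p\,\dif x\le C\,\lambda^{p(k-1)}\mu^{n-1-p}.
\]
On the other hand, the single component \(\partial^{(0,\dots,0,k-1)}u_{\lambda,\mu}\) has an unmasked \(\lambda^{k-1}\)-term, and together with \(\abs{x}\simeq\mu\) on the support and the elementary averaging \(\int\abs{\phi\cos(\lambda\cdot+\theta)}^q\to c_q\int\abs{\phi}^q\) (uniform in \(\theta\)), one obtains, for \(\lambda\) large,
\[
\int_{\R^n}\frac{\abs{D^{k-1}u_{\lambda,\mu}}^q}{\abs{x}^{n-(n/p-1)q}}\,\dif x\ge c\,\lambda^{q(k-1)}\mu^{n-1-\alpha},\qquad \alpha:=n-(n/p-1)q.
\]

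Substituting these into the assumed inequality and cancelling the common \(\lambda^{p(k-1)}\) leaves \(\mu^{(n-1-\alpha)p/q-(n-1-p)}\le C'\); a short algebraic check using \(\alpha=n-(n-p)q/p\) shows the exponent equals \(1-p/q\), strictly positive exactly because \(q>p\). Letting \(\mu\to+\infty\) produces the contradiction and forces \(A(D)\) to be elliptic. The main technical delicacy is the lower bound on \(\int\abs{D^{k-1}u_{\lambda,\mu}}^q/\abs{x}^{\alpha}\,\dif x\): the rapid oscillation of \(\cos(\lambda x_n)\) must not destroy the integral, which is handled either by the \(\lambda\to+\infty\) averaging above or, more cleanly, by replacing the test function by \(v_0\chi(\cdot/\mu)\phi\,e^{i\lambda x_n}\) and applying the hypothesis to the real and imaginary parts separately. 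The assumption \(q>p\) is essential; at \(q=p\) the critical exponent \(1-p/q\) vanishes and the argument breaks down, consistently with the non-elliptic example~\eqref{contrexempleellipticite}.
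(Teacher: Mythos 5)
Your proposal is correct, and it takes a genuinely different route from the paper's proof. Both arguments start from the same contradiction setup — pick $\xi_0$, $v_0$ with $A(\xi_0)v_0=0$ and build test functions concentrating transversally to $\xi_0$ — and both ultimately exploit the same exponent gap $1/p-1/q>0$. The difference lies in the geometry of the cutoff and the resulting technicalities. The paper uses a single parameter and the function $u_\lambda(x)=\varphi(x/\lambda)\psi(\xi\cdot x)v$: the transverse cutoff $\varphi(\cdot/\lambda)$ is a spreading \emph{ball}, which keeps the singularity of the weight $\abs{x}^{-(n-(n/p-1)q)}$ in play; this forces a Leibniz-expansion error term (controlled via the Minkowski inequality and a parameter $\alpha$) and a Fatou-lemma argument for the lower bound on the main term. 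You instead cut off in an \emph{annulus} $\{\mu\le\abs{x'}\le 2\mu\}$, so that $\abs{x}\simeq\mu$ uniformly on the support: the weight becomes a constant $\simeq\mu^{-\alpha}$, the Hardy-side lower bound is immediate, and no Fatou argument or separate error analysis is needed. This is a real simplification. One remark: your oscillatory factor $\cos(\lambda x_n)$ is actually superfluous — as your own exponent count shows, $\lambda$ cancels entirely in the final comparison, and you could simply take $\lambda=1$ (any fixed $\phi$ with $\partial_n^{k-1}(\phi\cos)\not\equiv 0$ works), which would also remove the oscillation-averaging step you flag as ``the main technical delicacy.'' With that simplification your argument becomes a clean one-parameter scaling proof that is shorter than the paper's; what the paper's construction buys is that the same family $u_\lambda$ is re-used elsewhere in the paper (e.g.\ in the necessity of ellipticity proofs for the pure Hardy inequality), so the authors likely preferred uniformity of the construction over local optimization here.
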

%The proof is an extension of the case \(q = n p / (n - p)\).
\begin{proof}
We assume by contradiction that there exist \(v \in V \setminus \{0\}\) and \(\xi \in \R^n \setminus \{0\}\) such that \(A(\xi) [v] = 0\).
Without loss of generality, we can further assume that \(\abs{\xi}=1\).
 We fix \(\varphi \in C^\infty_c(\R^n)\) and \(\psi \in C^\infty_c (\R)\) and we define for \(\lambda > 0\) the function \(u_\lambda \in C^\infty_c (\R^n; V)\) for every \(x \in \R^n\) by
\[
 u_\lambda (x) = \varphi \Bigl(\frac{x}{\lambda}\Bigr) \psi (\xi \cdot x)v\:. 
\]
By the iterated Leibniz product rule for differentiation, if \(\lambda \ge 1\) and \(x \in \R^n\),
\[
\begin{split}
\Bigabs{D^{k - 1} u_\lambda (x)-\varphi \Bigl(\frac{x}{\lambda}\Bigr) D^{k - 1}[\psi (\xi \cdot x)]v}& \leq \frac{C'}{\lambda}\Bigl(\sum_{j=1}^{k} \Bigabs{D^j\varphi \Bigl(\frac{x}{\lambda}\Bigr)} \Bigr)\Bigl(\sum_{j=0}^{k - 1} \abs{D^j\psi(\xi \cdot x)} \Bigr)\\
& \leq \frac{C'}{\lambda} \theta \Bigl(\frac{x}{\lambda}\Bigr) \eta (\xi \cdot x),
\end{split}
\]
where we have introduced to alleviate notation the functions \(\theta \in C_c (\R^n)\) and \(\eta \in C_c (\R)\) defined for \(y \in \R^n\) by \(\theta(y)=\sum_{j=1}^{k} \abs{D^j\varphi(y)}\) and for \(t \in \R\) by \(\eta(t)=\sum_{j=0}^{k - 1} \abs{D^j\psi(t)}\).
By the Minkowski inequality and our assumption, we thus get
\begin{multline}\label{eqpropnecessity1}
\Bigl( \int_{\R^n} \frac{\abs{\varphi(\frac{x}{\lambda})D^{k-1}\psi (\xi \cdot x)}^q}{\abs{x}^{n-(\frac{n}{p}-1)q}}\Bigr)^{\frac{1}{q}}\\
\leq C \Bigl( \int_{\R^n} \abs{A(D)u_\lambda}^p\Bigr)^{\frac{1}{p}} + C' \Bigl( \int_{\R^n} \frac{\abs{\theta(\frac{x}{\lambda}) \eta (\xi \cdot x)}^q}{\lambda^q \abs{x}^{n-(\frac{n}{p}-1)q}}\dif x\Bigr)^{\frac{1}{q}}.
\end{multline}
Since \(A(\xi)[v]=0\), we have for every \(x \in \R^n\),
\[
\abs{A(D)u_{\lambda} (x)}\leq \frac{C''}{\lambda}\Bigl(\sum_{j=1}^{k} \Bigabs{D^j\varphi \Bigl(\frac{x}{\lambda}\Bigr)} \Bigr)\Bigl(\sum_{j=0}^{k - 1} \abs{D^j\psi(\xi \cdot x)} \Bigr)=\frac{C''}{\lambda} \theta\Bigl(\frac{x}{\lambda}\Bigr) \eta(\xi \cdot x).
\]
If \(P_\xi\) denotes the orthogonal projection on \(\xi^\perp\) defined for \(y \in \R^n\) by \(P_\xi (y) = y - (\xi \cdot y) \xi\), we obtain by a change of variable,
\[
\begin{split}
\int_{\R^n}\abs{A(D)u_{\lambda}}^p&\leq \frac{C''^{p}}{\lambda^p} \int_{\R^n} \Bigabs{\theta\Bigl(\frac{x}{\lambda}\Bigr) \eta(\xi \cdot x)}^p \dif x\\
&= C''^{p} \lambda^{n - 1 - p} \int_{\R^n}  \Bigabs{\theta\Bigl(\frac{\xi \cdot y}{\lambda}\xi + P_\xi (y)\Bigr) \eta( \xi \cdot y)}^p \dif y.
\end{split}
\]

If we choose \(R > 0\) in such a way that \(\supp \theta \subset B_R\) and  \(\supp \eta \subset (-R,R)\), then for every \(\lambda \ge 1\) and \(y \in \R^n\),
\[
 \Bigabs{\theta\Bigl(\frac{\xi \cdot y}{\lambda}\xi + P_\xi (y)\Bigr) \eta( \xi \cdot y)}^p 
 \le \norm{\theta}_{L^{\infty}(\R^n)}^p \norm{\eta}_{L^\infty (\R)}^p\chi_{(-R,R)} (\xi\cdot y) \chi_{B_R} \bigl(P_\xi(y)\bigr) ,
\]
so that, by comparison of integrals, for every \(\lambda \ge 1\),
\begin{equation}\label{eqpropnecessity2}
\int_{\R^n}\abs{A(D)u_{\lambda}}^p \leq C''' \lambda^{n - p - 1}.
\end{equation}
By the same changes of variables, we also get for every \(\lambda \ge 1\),
\[
\int_{\R^n} \frac{\abs{\theta(\frac{x}{\lambda}) \eta (\xi \cdot x)}^q}{\lambda^q \abs{x}^{n-(\frac{n}{p}-1)q}}\dif x 
=\lambda^{(\frac{n}{p} - 1)q - 1} \int_{\R^n} \frac{\bigabs{\theta\bigl(\frac{\xi \cdot y}{\lambda}\xi + P_\xi (y)\bigr) \eta(\xi \cdot y)}^q}{\lambda^q \bigabs{\frac{\xi \cdot y}{\lambda}\xi + P_\xi (y)}^{n-(\frac{n}{p}-1)q}}\dif y.
\] 
The integrand can be bounded for every \(\alpha \in [0, n-(\frac{n}{p}-1)q]\), \(\lambda > 0\) and \(y \in \R^n\) as
\begin{multline*}
\frac{\bigabs{\theta\bigl(\frac{\xi \cdot y}{\lambda}\xi + P_\xi (y)\bigr) \eta(\xi \cdot y)}^q}{\lambda^q \bigabs{\frac{\xi \cdot y}{\lambda}\xi + P_\xi (y)}^{n-(\frac{n}{p}-1)q}} \\
\le \frac{\norm{\theta}_{L^\infty (\R^n)} \norm{\eta}_{L^\infty (\R)} \chi_{(-R, R)} (\xi \cdot y) \chi_{B_R} \bigl(P_\xi (y)\bigr)}{\lambda^{q - \alpha} \abs{\xi \cdot y}^\alpha \abs{P_\xi (y)}^{n(1 - \frac{q}{p}) + q - \alpha}}.
\end{multline*}
If \(1 - (\frac{n}{p} - 1) q < \alpha < 1\), the right-hand side is integrable, and thus
\begin{equation}\label{eqpropnecessity3}
 \int_{\R^n} \frac{\abs{\theta(\frac{x}{\lambda}) \eta (\xi \cdot x)}^q}{\lambda^q \abs{x}^{n-(\frac{n}{p}-1)q}}\dif x
\leq C'''' \lambda^{(\frac{n}{p} - 1)q - 1 - (q - \alpha)}.
\end{equation}
Finally by Fatou's lemma, we have 
\begin{multline}\label{eqpropnecessity4}
\liminf_{\lambda \to \infty} \lambda^{1-(\frac{n}{p} - 1)q} \int_{\R^n} \frac{\abs{\varphi(\frac{x}{\lambda})D^{k-1}\psi (\xi \cdot x)}^q}{\abs{x}^{n-(\frac{n}{p}-1)q}}\dif x \\
= \liminf_{\lambda \to \infty} \int_{\R^n} \frac{\bigabs{\varphi\bigl(\frac{\xi \cdot y}{\lambda}\xi + P_\xi (x)\bigr)D^{k-1}\psi (\xi \cdot y)}^q}{\bigabs{\frac{\xi \cdot y}{\lambda}\xi + P_\xi (y)}^{n-(\frac{n}{p}-1)q}}\dif y\\ 
\geq \int_{\R^n} \frac{\bigabs{\varphi\bigl(P_\xi (y)\bigr)D^{k-1}\psi (\xi \cdot y)}^q}{\bigabs{P_\xi (y)}^{n-(\frac{n}{p}-1)q}}\dif y.
\end{multline}
By inserting \eqref{eqpropnecessity2} and \eqref{eqpropnecessity3} into \eqref{eqpropnecessity1} and letting \(\lambda \to +\infty\), we deduce since \(q > p\) and \(\alpha < 1 < q\), in view of \eqref{eqpropnecessity4}
\[
\int_{\R^n} \frac{\bigabs{\varphi\bigl(P_\xi (y)\bigr)D^{k-1}\psi (\xi \cdot y)}^q}{\abs{P_\xi (y)}^{n-(\frac{n}{p}-1)q}}\dif y=0.
\]
This yields the desired contradiction because \(\psi\) and \(\varphi\) are arbitrary test functions.
\end{proof}

\subsection{The pure Hardy inequality}

In this section, we investigate the limiting case \(p=q\) in proposition~\ref{propositionNecessaryEllipticq}.

\subsubsection{Condition on the nonellipticity set}
If \(A (D)\) is not elliptic, we can consider the set of those \(\xi \in \R^n\) such that \(A (\xi)\) is not one-to-one. We show that if a Hardy inequality holds for \(A (D)\), then this set does not contain any linear subspace of dimension \(\lceil n - p \rceil\).

\begin{proposition}\label{propositionhyperplan}
Let \(A(D)\) be a linear differential operator of order \(k\) on \(\R^n\) from \(V\) to \(E\).  Let \(p\in [1,n)\).  If there exists \(C>0\) such that for every \(u\in C^{\infty}_c(\R^n ; V)\), 
\begin{equation*}
 \int_{\R^n} \frac{\abs{D^{k-1} u (x)}^p}{\abs{x}^{p}}\dif x \le C\int_{\R^n} \abs{A(D)u}^p\:,
\end{equation*}
then for every linear subspace \(\Pi\subseteq \R^n\) such that \(\dim \Pi \ge n - p\), there exists \(\xi \in \Pi\) such that \(A (\xi)\) is one-to-one.
\end{proposition}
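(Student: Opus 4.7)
The plan is to argue by contradiction: supposing the existence of a subspace \(\Pi \subseteq \R^n\) with \(\dim \Pi \geq n - p\) on which \(A(\xi)\) is never one-to-one, I would produce a sequence of test functions \((u_\lambda)_{\lambda \geq 1} \subset C^\infty_c(\R^n; V)\) that violates the Hardy inequality as \(\lambda \to \infty\). Since \(\xi \mapsto A(\xi)\) is polynomial, \(\dim \ker A(\xi)\) is upper semi-continuous on \(\Pi\) and attains its minimum \(m \geq 1\) on a nonempty Zariski-open subset; choosing \(\xi_0\) in that subset and a small neighborhood \(U \subseteq \Pi\) of \(\xi_0\), the family \(\{\ker A(\xi)\}_{\xi \in U}\) is a smooth real vector bundle of rank \(m\), and thus admits a smooth nowhere vanishing section \(v : U \to V\). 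For a fixed nonzero \(\eta \in C^\infty_c(U)\), I would define
\[
 u_0(x) = \realpart \int_\Pi e^{i \xi \cdot x} v(\xi) \eta(\xi) \dif\xi \:.
\]
By construction \(A(D) u_0 = 0\), and since \(\xi \cdot x = \xi \cdot P_\Pi x\) for every \(\xi \in \Pi\), the function \(u_0\) depends only on \(P_\Pi x\): one can write \(u_0(x) = U_0(P_\Pi x)\) with \(U_0\) Schwartz on \(\Pi\), and \(D^{k-1} u_0(x) = F(P_\Pi x)\) for some Schwartz \(F\) which is not identically zero for a generic choice of \(\eta\).

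Next I would truncate and scale. With \(\chi \in C^\infty_c(\R^n)\) equal to \(1\) on \(B_1\) and supported in \(B_2\), set \(u_\lambda(x) = \chi(x/\lambda) u_0(x)\). Since \(A(D) u_0 = 0\), only commutator terms survive in \(A(D) u_\lambda\), each gaining a factor \(\lambda^{-|\beta|}\) with \(|\beta| \geq 1\) and supported in the annulus \(\{\lambda \leq |x| \leq 2\lambda\}\). Writing \(d := \dim \Pi^\perp \leq p\) and using the splitting \(x = y + x'\) with \(y \in \Pi\), \(x' \in \Pi^\perp\), Fubini together with the Schwartz decay of \(D^j u_0\) in \(y\) yields
\[
 \int_{\R^n} \abs{A(D) u_\lambda}^p \dif x \leq C \lambda^{d - p}\:.
\]
Since \(u_\lambda = u_0\) on \(B_\lambda\), the same splitting combined with the rescaling \(x' = \abs{y}\,s\) on \(\Pi^\perp \simeq \R^d\) furnishes the lower bound
\[
 \int_{\R^n} \frac{\abs{D^{k-1} u_\lambda(x)}^p}{\abs{x}^p} \dif x \ge c \int_{\{y \in \Pi \st \abs{y} \le 1\}} \abs{F(y)}^p\, I_\lambda(y) \dif y\:,
\]
where \(I_\lambda(y) := \int_{\abs{s} \le \lambda/(2\abs{y})} (1 + \abs{s}^2)^{-p/2} \dif s\). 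When \(d = p\), the integrand decays like \(\abs{s}^{-1}\) at infinity, so \(I_\lambda(y) \sim \log(\lambda/\abs{y})\) and the left-hand side grows as \(\log \lambda\) while the right-hand side remains bounded. When \(d < p\) (automatic whenever \(p\) is not an integer), \(I_\lambda(y) \to \int_{\R^d}(1+\abs{s}^2)^{-p/2} \dif s > 0\), so the left-hand side is bounded below by a positive constant while the right-hand side tends to \(0\) like \(\lambda^{d-p}\). In both cases one contradicts the assumed Hardy inequality.

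The main obstacle is the first step, the construction of a smooth nowhere vanishing kernel section \(v\); this rests on upper semi-continuity of \(\dim \ker A(\xi)\) (a consequence of \(A(\xi)\) being polynomial in \(\xi\)) and on the local triviality of a constant-rank kernel family. Everything else is a scaling analysis analogous to the one in the proof of proposition~\ref{propositionNecessaryEllipticq}, the new twist being the use of a genuine \(d\)-dimensional kernel plane-wave packet parametrized by all of \(\Pi\), rather than a single plane wave in one direction.
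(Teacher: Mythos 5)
Your argument is correct but proceeds by a genuinely different route from the paper. The paper avoids Fourier integrals and kernel bundles entirely: after reducing to a linear differential operator \(A'(D')\) on \(\Pi\simeq\R^m\), it invokes the algebraic lemma~\ref{lemmaPotential} to produce a homogeneous \emph{differential} operator \(B'(D')\neq 0\) with \(A'(D')\circ B'(D')=0\), and then takes \(w:=B'(D')v\in C^\infty_c(\Pi;V)\) as a \emph{compactly supported} kernel element. The test function is the tensor product \(u_\lambda(x',x'')=w(\lambda x')\varphi(x'')\), with the scaling concentrating in the \(\Pi\)-directions, and letting \(\lambda\to\infty\) the Hardy inequality on \(\R^n\) degenerates to the scalar Hardy inequality \(\int_{\R^{n-m}}|\varphi|^p/|x''|^p\le C\int_{\R^{n-m}}|D\varphi|^p\), which forces \(p<n-m\), i.e.\ \(\dim\Pi<n-p\). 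Your construction instead produces a Schwartz (not compactly supported) plane-wave packet \(u_0(x)=\realpart\int_\Pi e^{i\xi\cdot x}v(\xi)\eta(\xi)\,d\xi\) with \(A(D)u_0=0\), constant in the \(\Pi^\perp\)-directions, and truncates at scale \(\lambda\) so the scaling goes the other way (expanding in \(\Pi^\perp\)); the contradiction then comes from comparing growth rates directly rather than from reducing to a known scalar inequality. What the paper's route buys is that lemma~\ref{lemmaPotential}, being purely algebraic and producing a compactly supported potential, can be reused elsewhere and avoids any discussion of generic rank, local triviality of the kernel bundle, or why the cosine/sine Fourier transform of \(\xi^\alpha v\eta\) is nonvanishing --- points that your sketch handles with ``upper semi-continuity plus local triviality'' and ``for a generic choice of \(\eta\)'' and would need to be tightened in a full write-up (e.g.\ take \(U\) a small ball avoiding \(0\) and \(-U\), and use real-analyticity of \(F\) to guarantee \(\int_{|y|\le 1}|F|^p>0\)). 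Conversely your route is more self-contained, bypassing the determinant construction of lemma~\ref{lemmaPotential} and appealing only to standard facts about Fourier transforms and constant-rank families.
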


The above proposition implies in particular that the inequality cannot hold when \(\dim V>\dim E\). It also shows that when \(n = 2\), the operator \(A(D)\) is necessarily elliptic.

In order to prove proposition~\ref{propositionhyperplan}, we rely on the following algebra property.

\begin{lemma}
\label{lemmaPotential}
Let \(A (D)\) be a homogenous differential operator from \(V\) to \(E\). Then there exists a homogeneous differential operator \(B (D) \) from \(V\) to \(V\) 
such that 
\[
  A (D) \circ B (D) = 0
\]
and 
\[
 \max_{\xi \in \R^n} \dim B (\xi)[V] = \min_{\xi \in \R^n} \dim \ker A (\xi).
\]
\end{lemma}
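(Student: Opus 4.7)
\textbf{Plan for lemma~\ref{lemmaPotential}.}
Let \(N = \dim V\) and set \(r := \min_{\xi \in \R^n} \dim \ker A(\xi)\), which coincides with the generic dimension of \(\ker A(\xi)\). The upper bound \(\max_\xi \dim B(\xi)[V] \le r\) comes for free from the composition identity: \(A(D) \circ B(D) = 0\) translates on symbols to \(A(\xi) B(\xi) = 0\) for every \(\xi\), so \(B(\xi)[V] \subseteq \ker A(\xi)\); by lower semicontinuity of rank for polynomial matrices, \(\max_\xi \rank B(\xi)\) equals the generic rank of \(B\), which in turn is bounded by \(\dim \ker A(\xi) = r\) at a generic \(\xi\).

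The substantive part is constructing a homogeneous \(B\) that matches this bound. Working over the rational function field \(K = \R(\xi_1, \ldots, \xi_n)\), the symbol \(A(\xi)\) defines a \(K\)-linear map \(V \otimes K \to E \otimes K\) whose kernel has \(K\)-dimension \(r\); clearing denominators in any \(K\)-basis yields polynomial vectors \(v_1, \ldots, v_r \in V \otimes \R[\xi]\) annihilated by \(A(\xi)\) and generically linearly independent. The homogeneity of \(A(\xi)\) is the key ingredient: decomposing each \(v_i = \sum_j v_{i,j}\) into homogeneous components, the identity \(A(\xi) v_i(\xi) = 0\) splits by degree into \(A(\xi) v_{i,j}(\xi) = 0\), so every \(v_{i,j}\) lies in \(\ker A\). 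Since these homogeneous pieces collectively still span the \(r\)-dimensional \(K\)-space \(\ker A\), I can extract homogeneous polynomial vectors \(u_1, \ldots, u_r\) of respective degrees \(m_1, \ldots, m_r\) that remain \(K\)-linearly independent.

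To assemble \(B\), I set \(m := \max_i m_i\), pick a linear form \(\ell \not\equiv 0\) (for instance \(\ell(\xi) = \xi_1\)), fix a basis \(e_1, \ldots, e_N\) of \(V\), and define \(B(\xi) e_i := \ell(\xi)^{m - m_i} u_i(\xi)\) for \(1 \le i \le r\) and \(B(\xi) e_i := 0\) for \(i > r\). The resulting map \(\xi \mapsto B(\xi) \in \Lin(V; V)\) is polynomial and homogeneous of degree \(m\), satisfies \(A(\xi) B(\xi) = 0\) by construction, and on the nonempty open set where \(\ell(\xi) \ne 0\) and the \(u_i(\xi)\) are linearly independent the image \(B(\xi)[V]\) has dimension \(r\); combined with the upper bound this yields the desired equality. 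The step I expect to be the most delicate is precisely this degree-reconciliation: one has to verify that homogeneous representatives of the generic kernel can be chosen simultaneously \(K\)-linearly independent, which rests on the graded-module structure of \(\ker A\) over \(\R[\xi]\) and the fact that taking homogeneous components preserves membership in the kernel.
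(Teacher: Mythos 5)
Your argument is correct but follows a genuinely different path from the paper's. The paper's proof is an explicit determinantal construction: it fixes \(\xi_*\) where \(\rank A(\xi_*)\) is maximal, chooses a projection \(P\) of \(V\) onto \(A(\xi_*)^*[E]\) and vectors \(e_1,\dots,e_s\in E\) with \(A(\xi_*)^*[e_i]\) independent, and writes \(B(\xi)^*[v]\) as an alternating sum of \(s\times s\) determinants built from the \(P[A(\xi)^*[e_j]]\); antisymmetry of the determinant forces \(A(\xi)\circ B(\xi)=0\), and the rank count comes from \(B(\xi)^*[V]\subseteq\ker P\) together with the injectivity of \(B(\xi_*)^*\) on \(\ker P\). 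Your construction instead works over the rational function field \(K=\R(\xi_1,\dots,\xi_n)\): the generic kernel of \(A(\xi)\) is a \(K\)-space of dimension \(r=\min_\xi\dim\ker A(\xi)\), a polynomial system of generators is obtained by clearing denominators, and, exploiting the homogeneity of \(A\), every homogeneous component of a polynomial kernel element again lies in the kernel, so one can extract \(r\) homogeneous \(K\)-independent generators and pad their degrees by powers of \(\xi_1\). The step you flagged as delicate is in fact elementary: the homogeneous components of your \(v_i\) have \(K\)-span containing all the \(v_i\), hence the whole \(r\)-dimensional kernel, and from any spanning set over a field one extracts a basis --- no graded-module theory is needed beyond the homogeneous-splitting observation. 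The two routes buy different things: the paper's produces a fully explicit formula, homogeneous of degree \(ks\), while yours is more structural and makes transparent why a homogeneous potential of maximal generic rank must exist.
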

\begin{proof} 
Choose \(\xi_* \in \R^n\) such that \(\dim \ker A (\xi_*) = s:= \min_{\xi \in \R^n} \dim \ker A (\xi)\).
By the fundamental theorem of linear algebra, \(s = \max_{\xi \in \R^n} \dim A (\xi)^* [E]\).
Define \(P: V \to V\) to be a projection on \(A (\xi_*)^*[E]\) and choose the vectors \(e_1, \dotsc, e_s \in E\) so that their images
\(A (\xi_*)^* [e_1], \dotsc, A (\xi_*)^* [e_s]\) are linearly independent in \(V\).
Define now \(B (\xi) \in \Lin (V; V)\) by
\begin{multline*}
 B (\xi)^* [v]
 = \det \bigl(P [A (\xi)^* [e_1]], \dotsc, P [A (\xi)^* [e_s]]\bigr) v\\
 + \sum_{i = 1}^s
 (- 1)^i \det \bigl(P[v], P [A (\xi)^* [e_1]], \dotsc, P [A (\xi)^* [e_{i - 1}]], \\
 P [A (\xi)^* [e_{i + 1}]], \dotsc, P [A (\xi)^* [e_s]]\bigr) A(\xi)^*[e_i]
\end{multline*}
where \(\det\) is a determinant on \(A (\xi_*)^*[E]=P(V)\).

For every \(e_0 \in E\) and \(v \in V\), we have 
\begin{multline*}
 e_0 \cdot A (\xi)[B(\xi)[v]]
 = \sum_{i = 0}^s
 (- 1)^i \det \bigl(P [A (\xi)^* [e_0]], \dotsc, P [A (\xi)^* [e_{i - 1}]], \\
 P [A (\xi)^* [e_{i + 1}]], \dotsc, P [A (\xi)^* [e_s]]\bigr) A(\xi)^*[e_i] \cdot v.
\end{multline*}
Since the right hand side is antisymmetric with respect to the vectors \(A (\xi)^* [e_0], \dotsc, A (\xi)^* [e_s]\) and \(\dim A (\xi)^* [E] \le s\), we have \(e_0 \cdot A (\xi)[B(\xi)[v]]=0\) and, since \(e_0 \in E\) is arbitrary, \(A (\xi) [B (\xi)[v]] = 0\).
In particular, we have for every \(\xi \in \R^n\),
\(\dim B (\xi)[V] \le \dim \ker A (\xi)\). 
Since \(B (\xi_*)^*\) is one-to-one on \(\ker P\) and \(\dim \ker P = \dim \ker A (\xi_*)\), we have \(B(\xi_*)^*[V] = \ker P\) and  \(\dim B (\xi_*)[V] = \dim \ker A (\xi_*)\). 

Finally, we claim that \(P(B(\xi)^*[v])=0\) for every \(v\in V\) and every \(\xi\in \R^n\). Indeed, let \(w_0=P[v]\) and \(w_i=P(A(\xi)^*[e_i])\) for \(i\in \{1,\dotsc, s\}\). Then 
\[
P(B(\xi)^{*}[v])=\sum_{i=0}^{s} (-1)^i \det \bigl( w_0,\dots, w_{i-1}, w_{i+1}, \dots, w_s \bigr) w_i.
\]
Since \(w_i \in P[V]\) for \(i=0,\dots, s\) and \(\dim P[V]=s\), we get \(P(B(\xi)^{*}[v])=0\) which proves the claim. This implies that \(B(\xi)^*[V] \subseteq \ker P = B(\xi_*)^*[V]\). Hence,
\[
\max_{\xi \in \R^n} \dim B (\xi)^*[V] = \dim B (\xi_*)^*[V].
\]
Since \(\dim B (\xi)^*[V]=\dim B (\xi)[V],\) this completes the proof of the lemma.
\end{proof}

\begin{proof}[Proof of proposition~\ref{propositionhyperplan}]
Consider a linear subspace \(\Pi\subseteq \R^n\) such that for every \(\xi\in \Pi\), \(\rank  A(\xi) < \dim V\). 
Without loss of generality, one may assume that \(\Pi=\R^{m}\times \{0\}\). We introduce the linear differential operator on \(\R^m\) from \(V\) to \(E\) defined for \(\xi' \in \R^m\) by  \(A'(\xi'):=A(\xi',0)\). 
By lemma~\ref{lemmaPotential}, there exists a linear differential operator \(B'(D')\) on \(\R^{m}\) from \(V\) to \(V\) such that \(B'(D')\ne 0\) and \(A'(D')B'(D')=0\). 

Let \(v\in C^{\infty}_c(\R^{m};V)\) be such that \(w:=B'(D') v\ne 0\). For any \(\varphi \in C^{\infty}_c(\R^{n - m})\) and \(\lambda > 0\), we consider \(u_\lambda \in C^\infty (\R^n; V)\) defined for \((x',x'') \in \R^m\times \R^{n-m}\) by \(u_{\lambda}(x', x'') = \varphi(x'') w(\lambda x') \). Since \(A'(D')B'(D')v=0\), we get
\[
  \abs{A(D) u_{\lambda} (x', x'')} \leq C' \sum_{j=0}^{k - 1} \lambda^j \abs{D^j w(\lambda x')} \abs{D^{k - j} \varphi (x'')}\:.
\]
Whence, 
\[
 \limsup_{\lambda \to \infty} \frac{1}{\lambda^{p (k- 1) - m}} \int_{\R^n} \abs{A(D) u_{\lambda}}^p
 \leq C' \int_{\R^n} \abs{D^{k - 1} w(x')}^p \abs{D \varphi (x'')}^p\dif x'\dif x''.
\]
By definition of  \(u_\lambda,\) we have  
\[
 \lambda^{p(k-1) - m}\int_{\R^n}\frac{\abs{D^{k-1}w(x')}^p \abs{ \varphi(x'')}^p}{\frac{\abs{x'}^{p}}{\lambda^{p}}+\abs{x''}^{p}}\dif x'\dif x''
\le C'' \int_{\R^n} \frac{\abs{D^{k-1} u_\lambda (x)}^p}{\abs{x}^{p}}\dif x\:, 
\]
By the assumption applied to \(u_{\lambda},\) we thus get
\begin{multline*}
\limsup_{\lambda \to \infty}\int_{\R^n}\frac{\abs{D^{k-1}w(x')}^p \abs{ \varphi(x'')}^p}{\frac{\abs{x'}^{p}}{\lambda^{p}}+\abs{x''}^{p}}\dif x'\dif x''
\\ \le C''' \int_{\R^n} \abs{D^{k - 1} w(x')}^p \abs{D \varphi (x'')}^p\dif x'\dif x''.
\end{multline*}
We let \(\lambda\) go to \(+\infty\) and then use Fubini theorem to obtain
\[
  \int_{\R^{n-m}}\frac{\abs{\varphi(x'')}^p}{\abs{x''}^{p}}\dif x''\leq  C'''' \int_{\R^{n-m}}\abs{D \varphi (x'')}^p\dif x''.
\] 
Since this must be true for any \(\varphi \in C^{\infty}_c(\R^{n-m})\), this implies that \(p < n -  m\).
\end{proof}

\subsubsection{Dimension reduction}
In proposition~\ref{propositionhyperplan}, we dealt with failure of the ellipticity because \(A (\xi)\) is not one-to-one. The ellipticy can fail more boldly when \(A (\xi) = 0\) on a \((n-m)\)--dimensional plane. In this case, the validity of the inequality reduces to that of an inequality on the \(m\)--dimensional space. 

\begin{proposition}\label{propositionprojection}
Let \(A(D)\) be a linear differential operator of order \(k\) on \(\R^n\) from \(V\) to \(E\). We assume that there exists a vector subspace \(\Pi\subseteq \R^n\) of dimension \(m \in \{1, \dotsc, n - 1\}\) and  a linear  differential operator \(A'(D')\) of order \(k\) on \(\Pi\) from \(V\) to \(E\) such that for any \(\xi\in \R^n\), we have \(A(\xi) = A'(P(\xi))\), where \(P:\R^n\to \Pi\) is a linear projection onto \(\Pi\). Let \(\ell \in \{0, \dots, k\}\) and \(C \in \R\).\\ 
For every \(u \in C^\infty_c(\R^n; V)\),
\begin{equation*}
  \int_{\R^n} \frac{\abs{D^{k - \ell} u (x)}^p}{\abs{x}^{\ell p}}\dif x \le C\int_{\R^n} \abs{A(D)u}^p\:, 
\end{equation*}
if and only if \(k=\ell\) and for every \(u \in C^\infty_c(\Pi; V)\),
\begin{equation*}
  \int_{\Pi} \frac{\abs{u (x)}^p}{\abs{x}^{k p}}\dif x \le C \int_{\Pi} \abs{A'(D')u}^p\:. 
\end{equation*}
\end{proposition}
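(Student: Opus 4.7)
The plan is to work in adapted orthonormal coordinates in which \(\Pi = \R^m \times \{0\}^{n-m}\) and \(P\) is the orthogonal projection, so the hypothesis becomes \(A(\xi', \xi'') = A'(\xi')\) and the operator \(A(D)\) involves only partial derivatives of \(u\) with respect to the first \(m\) variables, that is, \(A(D) u = A'(D') u\). I would then handle the two directions of the biconditional separately.

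The implication \((\Leftarrow)\) is essentially Fubini: assuming \(k = \ell\) and the \(\Pi\)-inequality with constant \(C\), I apply it to the slice \(u(\cdot, x'') \in C^\infty_c(\R^m; V)\) for each fixed \(x'' \in \R^{n-m}\), integrate the resulting pointwise inequality in \(x''\), and finally use the elementary bound \(\abs{x} \ge \abs{x'}\) to pass from \(\abs{x'}^{-kp}\) to \(\abs{x}^{-kp}\) in the denominator. The constant \(C\) is preserved throughout.

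For the implication \((\Rightarrow)\), I first prove \(k = \ell\) by a scaling test. Given \(v \in V \setminus \{0\}\), \(\chi \in C^\infty_c(\R^m \setminus \{0\})\) (crucially supported away from the origin), and \(\varphi \in C^\infty_c(\R^{n-m})\) non-zero, choose a multi-index \(\alpha \in \N^{n-m}\) with \(\abs{\alpha} = k - \ell\) such that \(\partial^\alpha \varphi \not\equiv 0\), and set \(u_\mu(x', x'') := \chi(x')\, \varphi(\mu x'')\, v\) for \(\mu \ge 1\). Since \(A(D)\) involves only \(x'\)-derivatives, a direct computation yields \(\int \abs{A(D) u_\mu}^p = \mu^{-(n-m)} \norm{A'(D')(\chi v)}_{L^p}^p \norm{\varphi}_{L^p}^p\). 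On the other hand, using the lower bound \(\abs{D^{k-\ell} u_\mu} \ge \mu^{k-\ell} \abs{\chi(x')} \abs{\partial^\alpha \varphi(\mu x'')} \abs{v}\), the change of variables \(y'' = \mu x''\), and monotone convergence as \(\mu \to \infty\) (the support condition on \(\chi\) ensures that the limit integral is finite and strictly positive), I obtain \(\int \abs{D^{k-\ell} u_\mu}^p \abs{x}^{-\ell p} \ge c\, \mu^{(k-\ell)p - (n-m)}\) for some \(c > 0\) and all sufficiently large \(\mu\). Inserting both estimates into the hypothesis gives \(\mu^{(k-\ell)p} \le \text{const}\), which as \(\mu \to \infty\) forces \(k = \ell\).

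Knowing \(k = \ell\), I derive the \(\Pi\)-inequality by a dual test: for \(v \in C^\infty_c(\R^m; V)\) and \(\varphi \in C^\infty_c(\R^{n-m})\) non-zero, take \(u_\lambda(x', x'') := v(x')\, \varphi(\lambda x'')\). Both sides of the \(n\)-dimensional inequality carry a common factor \(\lambda^{-(n-m)}\); after dividing it out and performing the change of variables \(y'' = \lambda x''\) in the left-hand side, the integrand becomes \(\abs{v(x')}^p \abs{\varphi(y'')}^p (\abs{x'}^2 + \abs{y''}^2/\lambda^2)^{-kp/2}\), which is monotonically non-decreasing in \(\lambda\). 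The monotone convergence theorem and a division by \(\norm{\varphi}_{L^p}^p > 0\) then yield the \(\Pi\)-inequality with the very same constant \(C\). I anticipate no substantial conceptual obstacle here; the chief technical care is the bookkeeping of the scaling exponents and the verification that the convergence theorems apply, which is why the support condition on \(\chi\) in the \(k = \ell\) step is the key ingredient.
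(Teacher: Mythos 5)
Your proof is correct and takes essentially the same route as the paper's: Fubini plus the elementary bound \(\abs{x}\ge\abs{x'}\) for the converse, and a test family \(u_\lambda(x',x'')=v(x')\varphi(\lambda x'')\) with a change of variables and monotone convergence for the direct implication. The only minor difference is that you split the direct implication into two sub-steps with two test families (using a cutoff \(\chi\) supported away from the origin to establish \(k=\ell\) cleanly, then a second family for the \(\Pi\)-inequality), whereas the paper extracts both conclusions from a single test family in one pass; your variant is slightly more cautious about integrability near the origin but is not a substantively different argument.
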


This proposition generalizes example \eqref{contrexempleellipticite} given in the introduction.

\begin{proof}[Proof of proposition~\ref{propositionprojection}]
Without loss of generality, we can assume that \(\Pi=\R^m\times \{0\}\) and \(P(x)=x'\) where  we write \(x=(x',x'')\in \R^m \times \R^{n-m}\). 

Assume first that the inequality on \(\R^n\) holds true. For every function \(v\in C^\infty_c(\R^m; V), w\in C^\infty_c(\R^{n-m}; V) \setminus \{0\}\) and \(\lambda > 0\), we consider the map \(u_{\lambda}(x',x'')=v(x')w(\lambda x'')\). We observe that \(A(D)u_{\lambda}(x',x'')=(A'(D')v)(x')w(\lambda x'')\) and \(\abs{D^{k - \ell}u_{\lambda}(x', x'')}\geq \lambda^{k - \ell} \abs{v(x')}\abs{D^{k - \ell}w(\lambda x'')}\). By inserting this in the inequality on \(\R^n\), we get
\begin{multline*}
\lambda^{k - \ell}\int_{\R^n}\frac{\abs{v(x')}^p\abs{D^{k - \ell}w(x'')}^p}{\bigabs{(x', x''/\lambda)}^{\ell p}}\dif x'\dif x''\\
\leq C \int_{\R^n} \abs{A'(D')v(x')}^p\abs{w(x'')}^p\dif x'\dif x''\:.
\end{multline*}
Then, necessarily, \(k=\ell\) and by letting \(\lambda\) to \(+\infty\), we get
\[
\int_{\R^n}\frac{\abs{v(x') w(x'')}^p}{\abs{x'}^{kp}}\dif x'\dif x''\leq C \int_{\R^n} \abs{A'(D')v(x')}^p\abs{w(x'')}^p\dif x'\dif x''\:,
\]
and the inequality on \(\Pi\) now follows from Fubini theorem.

Conversely, assume that \(k=\ell\) and that the inequality holds on \(\Pi\).
For every \(u \in C^\infty_c(\R^n; V)\), we have by assumption
\[
\int_{\R^m} \frac{\abs{u(x',x'')}^p}{\abs{x'}^{k p}}\dif x'\leq C \int_{\R^m}\abs{A'(D') u(x',x'')}^p\dif x'\:.
\]
It follows that
\[
\int_{\R^m} \frac{\abs{u(x',x'')}^{p}}{\abs{(x', x'')}^{k p}}\dif x'\leq C \int_{\R^m}\abs{A(D) u(x',x'')}^p\dif x'\:.
\]
We now integrate in \(x''\) to get the result.
\end{proof}

\section{Hardy inequalities for nonelliptic families of operators}

\subsection{Direct sum of directional derivatives}
We proceed to give a necessary and sufficient condition  for a special class of differential operators of order one: 

\begin{proposition}
\label{propositionHardyRankOne}
Let \(\ell \in \N_*\), \(a_1, \dotsc, a_\ell \in V \setminus\{0\}\) and \(b_1, \dotsc, b_{\ell} \in \R^n \setminus \{0\}\).
The following are equivalent
\begin{enumerate}[(i)]
\item \label{cond2dinequality} there exists \(C>0\) such that for every \(u \in C^\infty_c(\R^n; V)\),
\begin{equation*}
  \int_{\R^n} \frac{\abs{u (x)}}{\abs{x}}\dif x \le C \sum_{i = 1}^\ell \int_{\R^n} \abs{a_i \cdot D u(x)[b_i]}\:,
\end{equation*}
\item \label{cond2dalgebraic}
for every \(\xi \in \R^n \setminus \{0\}\) and \(v \in V \setminus \{0\}\) there exists \(i \in \{1, \dotsc, \ell\}\) such that 
\( (a_i \cdot v) \ne 0\) and \(\abs{b_i}^2 \xi \ne (\xi \cdot b_i) b_i\).
\end{enumerate}
\end{proposition}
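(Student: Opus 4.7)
The plan is to treat the two implications separately, using a concentration/scaling argument for the necessary direction and a compactness plus scalar-Hardy reduction for the sufficient one.

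For $(i) \Rightarrow (ii)$, I would argue by contradiction. Suppose $(v_0,\xi_0) \in (V \setminus \{0\}) \times (\R^n\setminus\{0\})$ witnesses the failure of (ii), so that $b_i$ is a scalar multiple of $\xi_0$ for every $i$ with $a_i\cdot v_0\neq 0$. After rotating coordinates so that $\xi_0 = e_1$ and writing $x = (x_1,x')$, each such $b_i$ equals $\lambda_i e_1$. Fix $\phi \in C^\infty_c(\R)$ with $\phi\ge 0$ and $\phi(0)>0$, and $\psi \in C^\infty_c(\R^{n-1})$ with $\psi\ge 0$ and $\psi\not\equiv 0$, and test (i) on the family
\[
 u_\lambda(x) := v_0\, \phi(x_1/\lambda)\,\psi(x'),\qquad \lambda \ge 1.
\]
Because every relevant $b_i$ is parallel to $e_1$ and $\psi$ depends only on $x'$, one computes $A(D)u_\lambda|_i = (a_i\cdot v_0)\,\lambda_i\,\lambda^{-1}\,\phi'(x_1/\lambda)\,\psi(x')$ when $a_i\cdot v_0\neq 0$ and $0$ otherwise; the substitution $x_1 = \lambda t$ shows that $\sum_i \int |A(D)u_\lambda|_i|$ is uniformly bounded in $\lambda$. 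On the other hand, the same substitution combined with Fatou's lemma yields
\[
 \liminf_{\lambda\to\infty}\int_{\R^n}\frac{|u_\lambda|}{|x|}\,dx \ge |v_0|\,\|\psi\|_{L^1}\int_\R \frac{|\phi(t)|}{|t|}\,dt = +\infty,
\]
because $\phi(0)>0$. This contradicts (i).

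For $(ii) \Rightarrow (i)$, the main ingredient is a \emph{two-directional scalar Hardy inequality}: for $n\ge 2$, $f \in C^\infty_c(\R^n)$, and any linearly independent $\beta_1, \beta_2 \in \R^n$,
\[
 \int_{\R^n} \frac{|f|}{|x|}\, dx \le C(\beta_1,\beta_2) \int_{\R^n} \bigl(|\partial_{\beta_1} f| + |\partial_{\beta_2} f|\bigr)\, dx,
\]
obtained by slicing: writing $\R^n = \Pi \oplus \Pi^\perp$ with $\Pi = \operatorname{span}(\beta_1,\beta_2)$, using $|x| \ge |P_\Pi x|$, applying the 2D Hardy inequality in each slice $\Pi \times \{y\}$, and integrating over $y\in\Pi^\perp$. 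The condition $|b_i|^2\xi \neq (\xi\cdot b_i)b_i$ means precisely that $\xi$ is not colinear with $b_i$, so (ii) rephrases as: for every $v \in V\setminus\{0\}$, $\{b_i : a_i\cdot v \ne 0\}$ spans a subspace of $\R^n$ of dimension at least $2$. Compactness of the unit sphere of $V$ then yields a finite cover of $V\setminus\{0\}$ by open cones $\mathcal V_\alpha$ together with index pairs $(i_\alpha,j_\alpha)$ such that $b_{i_\alpha}, b_{j_\alpha}$ are linearly independent and $|a_{i_\alpha}\cdot v|, |a_{j_\alpha}\cdot v| \ge c_\alpha|v|$ on $\mathcal V_\alpha$. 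Choosing a smooth partition of unity $\{\chi_\alpha\}$ on $V\setminus\{0\}$, homogeneous of degree zero and subordinate to $\{\mathcal V_\alpha\}$, one reduces $\int |u|/|x|$ to a sum of integrals of the scalar quantities $\chi_\alpha(u) |a_{i_\alpha}\cdot u|/|x|$ and applies the two-directional scalar Hardy inequality to $\chi_\alpha(u)(a_{i_\alpha}\cdot u)$ in the directions $b_{i_\alpha}, b_{j_\alpha}$, identifying the leading terms via $\partial_{b_{i_\alpha}}(a_{i_\alpha}\cdot u) = A(D)u|_{i_\alpha}$ and analogously for $j_\alpha$.

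The substantive difficulty lies in the $(ii) \Rightarrow (i)$ direction. The data $A(D)u|_i = a_i \cdot \partial_{b_i} u$ couples the scalar components of $u$, so that a naive decomposition in a fixed basis of $V$ does not, in general, provide two linearly independent directional derivatives of each scalar component. The partition of unity on $V\setminus\{0\}$ localizes the estimate to cones where two suitable directions $b_{i_\alpha}, b_{j_\alpha}$ are available; however, the composition $\chi_\alpha \circ u$ is nonlinear, and its differentiation produces commutator terms involving $D\chi_\alpha(u) \cdot Du$ which must be absorbed into the main estimate, which is possible thanks to the degree-zero homogeneity of $\chi_\alpha$ (forcing $|D\chi_\alpha(u)| \lesssim 1/|u|$ on the cone). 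By contrast, $(i) \Rightarrow (ii)$ is a routine scaling and concentration argument.
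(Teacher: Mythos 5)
Your $(i)\Rightarrow(ii)$ argument is correct and is essentially the same concentration idea as the paper, merely with a different normalization: you dilate along the bad direction $e_1$ so that the right-hand side stays bounded while the left-hand side diverges, whereas the paper shrinks the transverse variable so that, after rescaling, the limit inequality is the false one-dimensional Hardy inequality. Both arguments are valid and arrive at the same contradiction.

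The $(ii)\Rightarrow(i)$ direction, however, has a genuine gap. When you apply the two-directional scalar Hardy inequality to $f_\alpha := \chi_\alpha(u)\,(a_{i_\alpha}\cdot u)$ in the directions $b_{i_\alpha}$ and $b_{j_\alpha}$, you are required to control $\int\abs{\partial_{b_{j_\alpha}}f_\alpha}$. The ``main'' part of this derivative is $\chi_\alpha(u)\,(a_{i_\alpha}\cdot\partial_{b_{j_\alpha}}u)$, which pairs the \emph{wrong} vector $a_{i_\alpha}$ with the direction $b_{j_\alpha}$: the hypothesis only gives control over $a_{j_\alpha}\cdot\partial_{b_{j_\alpha}}u$, and in $L^1$ there is no way to pass from one to the other (this is precisely the Ornstein-type obstruction the whole theory is built to avoid). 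For instance, with $V=\R^2$, $a_1=e_1$, $a_2=e_2$, $a_3=(e_1+e_2)/\sqrt2$, $b_i=a_i$, the quantity $a_1\cdot\partial_{b_3}u = \frac{1}{\sqrt2}(\partial_1 u^1+\partial_2 u^1)$ is not controlled in $L^1$ by $\partial_1 u^1$, $\partial_2 u^2$ and $(\partial_1+\partial_2)(u^1+u^2)$. The commutator terms $D\chi_\alpha(u)[\partial_{b}u]\,(a_{i_\alpha}\cdot u)$ are a second and equally fatal problem: after using degree-zero homogeneity they are bounded by $\abs{\partial_b u}$, which again is a full (uncontrolled) directional derivative of $u$, not a component of $A(D)u$.

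The paper avoids both difficulties by partitioning \emph{physical} space $\R^n$ rather than the value space $V$. First, taking $\xi=b_1$ in~(ii) shows that the vectors $a_j$ whose $b_j$ is not parallel to $b_1$ already span $V$, so $a_1=\sum_{j\ge2}\lambda_j a_j$ with $\abs{b_1\cdot b_j}<\abs{b_1}\abs{b_j}$. Writing $B_j=\{x : \abs{x\cdot b_1}\le\abs{x\cdot b_j}\}$, one splits $\int\abs{a_1\cdot u}/\abs{x}$ over $\bigcup_j B_j$ and its complement; on the complement one replaces $a_1\cdot u$ by $\sum\lambda_j\,a_j\cdot u$ and reduces by symmetry, while on $B_j$ one writes $a_1\cdot u(x)=\int_{-\infty}^0 (a_1\cdot\partial_{b_1}u)(x+tb_1)\dif t$. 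This integration is in direction $b_1$ of the scalar $a_1\cdot u$, so the integrand is exactly $A(D)u|_1$, with no cross terms. The geometric lemma controls the resulting double integral $\int_{B_j}\int_{\R}\frac{f(x+tb_1)}{\abs{x}}\dif t\dif x$ by $\int f$ precisely because on $B_j$ the directions $b_1$ and $b_j$ are transversal. The structural point to retain is that each scalar quantity $a_i\cdot u$ must only ever be differentiated in the matching direction $b_i$, and the burden is shifted from a partition of unity in $V$ to a cone decomposition of $\R^n$.
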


If the linear differential operator \(A(D)\) of order \(1\) on \(\R^n\) from \(V\) to \(\R^\ell\) is defined for \(v \in V\) and \(\xi \in \R^n\) by
\[
A(\xi)(v):=((\xi \cdot b_i) (a_i \cdot v) )_{1\le i\le \ell},
\]
one checks that \( A (D) \) is canceling if and only if \(n \ge 2\) and that \(A (D) \) is elliptic if and only if for every \(\zeta \in \R^n \setminus \{0\}\) and \(v \in \R^n \setminus \{0\}\) there exists \(i \in \{1, \dotsc, \ell\}\) such that 
\( (a_i \cdot v) \ne 0\) and \(\zeta \cdot b_i \ne 0\), that is, instead of forbidding the vector \(b_i\) to be colinear with \(\xi\), we are prohibiting \(b_i\) from being orthogonal to \(\zeta\). In the two dimensional case, the ellipticity condition is seen to be equivalent to \eqref{cond2dalgebraic} by taking \( (\zeta \cdot \xi) = 0\), in higher dimension, the condition~\eqref{cond2dalgebraic} is weaker than the ellipticity.

For instance, when \(\ell = \dim V +1\), \eqref{cond2dalgebraic} is  satisfied if and only if \(a_1, \dots, a_{\ell}\) are \(\ell-1\) by \(\ell-1\) linearly independent and \(b_1, \dots, b_\ell\) are 2 by 2 linearly independent.

\begin{proof}
Assume by contradiction that \eqref{cond2dinequality} holds while \eqref{cond2dalgebraic} is not satisfied. Then there exist \(\xi \in \R^n \setminus \{0\}\) and \(v\in V \setminus \{0\}\) such that for every \(i \in \{1, \dotsc, \ell\}\), either \(v \cdot a_i=0\) or \(\abs{b_i}^2 \xi = (b_i \cdot \xi)b_i\). 

For every  \(\varphi\in C^{\infty}_{c}(\R)\) and \(\psi\in C^{\infty}_{c}(\R^n)\), define 
\[
  u_\lambda (x) = \varphi(\xi \cdot x)\psi \bigl(\lambda (\abs{\xi}^2 x - (\xi \cdot x) \xi)\bigr) v 
\]
and note that
\begin{multline*}
  a_i \cdot D u_\lambda (x) [b_i] = (v \cdot a_i) (\xi \cdot b_i) \varphi' (\xi \cdot x) \,\psi \bigl(\lambda (\abs{\xi}^2 x - (\xi \cdot x) \xi)\bigr)\\
  + (v \cdot a_i) \varphi (\xi \cdot x)\,\lambda D\psi \bigl(\lambda (\abs{\xi}^2 x - (\xi \cdot x) \xi)\bigr)[\abs{\xi}^2 b_i - (\xi \cdot b_i) \xi].
\end{multline*}

Now apply \eqref{cond2dinequality} to \(u_\lambda\)
\begin{multline*}
 \int_{\R^n} \frac{\bigabs{\varphi(\xi \cdot x)\psi \bigl(\lambda (\abs{\xi}^2 x - (\xi \cdot x) \xi)\bigr)}}{\abs{x}}\dif x\\
 \le C\int_{\R^n} \bigabs{\varphi' (\xi \cdot x) \psi \bigl(\lambda (\abs{\xi}^2 x - (\xi \cdot x) \xi)\bigr)} \dif x\:,
\end{multline*}
By a change of variable, this becomes 
\begin{multline*}
 \int_{\R^n} \frac{\bigabs{\varphi (\xi \cdot x)\psi \bigl(\abs{\xi}^2 x - (\xi \cdot x) \xi)\bigr)}}{\sqrt{\abs{\xi \cdot x}^2 + \lambda^{-2} (\abs{x}^2\abs{\xi}^2 - \abs{\xi \cdot x}^2)}}\dif x\\ \le C \int_{\R^n} \bigabs{\varphi' (\xi \cdot x) \psi \bigl(\abs{\xi}^2 x - (\xi \cdot x) \xi)\bigr)} \dif x\:,
\end{multline*}
By letting \(\lambda \to \infty\), we conclude that for every \(\varphi \in C^\infty_c (\R)\),
\[
  \int_{\R} \frac{\abs{\varphi (t)}}{\abs{t}}\dif t \le C\int_{\R} \abs{\varphi'(t)}\dif t,
\]
which cannot hold.

Conversely, if \eqref{cond2dalgebraic} holds true, then for every \(\xi \in \R^n \setminus\{0\}\)
\begin{equation}
\label{eqConditionH}
  \bigl\{ a_i \st i \in \{1, \dotsc, \ell\} \text{ and } \abs{b_i}^2 \xi \ne (\xi \cdot b_i) b_i \bigr\}
\end{equation}
generates \(V\). In particular, the set 
\[
  \bigl\{ a_i \st i \in \{1, \dotsc, \ell\} \bigr\}
\]
generates \(V\). 
Without loss of generality, we can assume that \(\abs{b_i} = 1\) for every \(i \in \{1, \dotsc, \ell\}\).

In order to prove the corresponding Hardy inequality \eqref{cond2dinequality}, it is thus enough to establish  that for any \(u\in C^{\infty}_c(\R^n ; V)\) and \(1\le i\le \ell\)
\[
\int_{\R^n} \frac{\abs{a_i\cdot u(x)}}{\abs{x}}\dif x \le C\sum_{j=1}^{\ell}\int_{\R^n} \abs{a_j \cdot D u (x)[b_j]}\dif x\:.
\]
Consider the case \(i = 1\) and let \(u\in C^{\infty}_c(\R^n ; V)\).
By taking \(\xi=b_1\) in \eqref{eqConditionH}, we can write \(a_1=\lambda_2 a_{i_2}+\dotsb+\lambda_r a_{i_r}\), for some \(r\in \{2,\dots, \ell\}\), \(i_j\in \{2,\dots, \ell\}\), \(\lambda_j\in \R\) and \(\abs{b_1 \cdot b_{i_j}} < 1\) for every \(j \in \{2, \dotsc, r\}\). In order to simplify the notation, we can assume that \((i_2,\dots, i_r) = (2,\dots,r)\).
It follows that
\begin{equation}\label{equation2propositionab}
a_1\cdot u(x)=\sum_{j=2}^r \lambda_j a_j\cdot u(x)\:.
\end{equation}
We now estimate
\[
  \int_{\R^n} \frac{\abs{a_1\cdot u (x)}}{\abs{x}}\dif x 
   \le \int_{\R^n \setminus \bigcup_{i = 2}^{r} B_i}  \frac{\abs{a_1\cdot u (x)}}{\abs{x}}\dif x 
       + \sum_{i=2}^{r}
             \int_{B_i}  \frac{\abs{a_1\cdot u (x)}}{\abs{x}}\dif x\:,
\]
where for \(i \in \{2, \dotsc, r\}\),
\[
  B_i = \{ x \in \R^n \st \abs{x \cdot b_1} \le \abs{x \cdot b_i}\}.
\]
By \eqref{equation2propositionab}, this gives
\begin{multline}
\int_{\R^n}\!\!\frac{\abs{a_1\cdot u (x)}}{\abs{x}}\dif x \le \int_{\R^n \setminus \bigcup_{i = 2}^{r}\!\! B_i}\sum_{j=2}^{r}\abs{\lambda_j}\frac{\abs{a_j\cdot u(x)}}{\abs{x}}\dif x + \sum_{i=2}^{r}\int_{B_i}\!\!\frac{\abs{a_1\cdot u(x)}}{\abs{x}}\dif x\\
\le C \sum_{i=2}^{r} \int_{\R^n\setminus B_i}\frac{\abs{a_i\cdot u(x)}}{\abs{x}}\dif x +\int_{B_i} \frac{\abs{a_1\cdot u(x)}}{\abs{x}}\dif x\:, \label{equation10propositionabbis}
\end{multline}
Since for every \(1\le i \le r\), the roles of \(i\) and \(1\) are symetric, we  only need to prove 
\begin{equation}\label{equation2propositionabbis} 
\int_{B_i} \frac{\abs{a_1\cdot u(x)}}{\abs{x}}\dif x\leq C \int_{\R^n} \abs{a_1\cdot Du(x)[b_1]}\dif x\:,
\end{equation}
In view of the identity
\begin{equation}\label{equation3propositionab}
a_1\cdot u(x)= \int_{-\infty}^{0}D(a_1\cdot u)(x+tb_1)[b_1]\dif t\:,
\end{equation}
we have
\begin{equation*}
\int_{B_i} \frac{\abs{a_1\cdot u (x)}}{\abs{x}}\dif x \le  \int_{B_i} \int_{-\infty}^{0}\abs{D(a_1\cdot u)(x+t b_1)[b_1]}\dif t\,\frac{\dif x}{\abs{x}} \label{equation4propositionab}\:.
\end{equation*}
We complete the proof of the proposition by the next lemma.
\end{proof}

\begin{lemma}
Let \(b, c \in \R^n \setminus \{0\}\) and define
\[
 J = \{ x \in \R^n \st \abs{b \cdot x} \le \abs{c \cdot x} \}.
\]
If \(\abs{b \cdot c} < \abs{b}^2\), then for every nonnegative function \(f \in L^1 (\R^n)\),
\[
\int_{J} \int_{\R} \frac{f (x+tb)}{\abs{x}} \dif t\dif x \leq 2\frac{\abs{b} \sqrt{\abs{b}^2 \abs{c}^2 - (b \cdot c)^2}}{\abs{b}^4 - (b \cdot c)^2} 
\int_{\R^n} f (x)\dif x .
\]
\end{lemma}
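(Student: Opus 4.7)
The plan is to swap the roles of the variables by the substitution $y = x+tb$ so that the inner one-dimensional line integral of $f$ collapses, at the cost of producing a weight $K(y)$ whose essential supremum is what must be estimated. By Tonelli's theorem (which applies since $f \ge 0$) and the fact that, for fixed $t$, the change of variables $y = x + tb$ has unit Jacobian, one has
\[
\int_J \int_\R \frac{f(x+tb)}{|x|}\dif t\dif x = \int_{\R^n} f(y)\, K(y)\dif y,
\qquad
K(y) := \int_{\{t\in\R\,:\, y-tb\,\in\, J\}} \frac{\dif t}{|y-tb|}.
\]
The heart of the argument is to show that $K(y)\le 2|b|\sqrt{|b|^2|c|^2-(b\cdot c)^2}/(|b|^4-(b\cdot c)^2)$ uniformly in $y$.

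To estimate $K(y)$ I would decompose $y=z+s_0 b$ with $z\in b^\perp$ and set $u = s_0 - t$. Writing $\alpha := c\cdot z$ and $\beta := b\cdot c$, one checks that $b\cdot(y-tb)=u|b|^2$, $c\cdot(y-tb)=\alpha+u\beta$ and $|y-tb|^2=|z|^2+u^2|b|^2$. Consequently
\[
K(y) = \int_{\{u\,:\, |u||b|^2\le |\alpha + u\beta|\}} \frac{\dif u}{\sqrt{|z|^2+u^2|b|^2}},
\]
which depends on $y$ only through $z$. Thus $s_0$ plays no role, and we are left with a one-dimensional problem.

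Next I would analyze the admissible set for $u$. Squaring the inequality $|u||b|^2\le |\alpha+u\beta|$ produces the quadratic inequality $u^2(|b|^4-\beta^2) - 2\alpha\beta u - \alpha^2 \le 0$. The leading coefficient $|b|^4-\beta^2$ is strictly positive by hypothesis, and a direct computation of the discriminant $4\alpha^2 |b|^4$ gives the roots $u_\pm = (\alpha\beta \pm |\alpha||b|^2)/(|b|^4-\beta^2)$, so the admissible set is the interval $[u_-, u_+]$ of length $u_+-u_- = 2|\alpha||b|^2/(|b|^4-\beta^2)$.

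The bound now falls out by using the trivial pointwise estimate $\sqrt{|z|^2+u^2|b|^2}\ge |z|$ when $z\ne 0$ (the case $z=0$ reduces via $|\beta|<|b|^2$ to the admissible $u$-set being $\{0\}$, so $K(y)=0$):
\[
K(y) \le \frac{u_+-u_-}{|z|} = \frac{2|b|^2\,|\alpha|}{(|b|^4-\beta^2)\,|z|}.
\]
Since $z\in b^\perp$, the Cauchy--Schwarz inequality gives $|\alpha| = |c\cdot z| \le |\pi c|\,|z|$, where $\pi c = c - (b\cdot c)/|b|^2\,b$ is the orthogonal projection of $c$ onto $b^\perp$; computing $|\pi c|^2 = |c|^2 - (b\cdot c)^2/|b|^2$ then yields exactly the announced uniform bound on $K$, and integration against $f$ concludes the proof.

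I do not anticipate any serious obstacle: the crucial step is recognizing the right change of variable $y=x+tb$, after which the problem becomes the completely explicit task of bounding the length of a root-interval of a quadratic and applying the one-line estimate $\sqrt{|z|^2+u^2|b|^2}\ge |z|$. The condition $|b\cdot c|<|b|^2$ enters precisely to guarantee positivity of the leading coefficient $|b|^4-(b\cdot c)^2$, so that the admissible set is a bounded interval rather than all of $\R$.
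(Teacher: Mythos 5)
Your proof is correct and follows essentially the same route as the paper's: change variables via $y=x+tb$, bound $\abs{y-tb}$ from below by the distance from $y$ to the line $\R b$, compute the length of the admissible $t$-interval (a quadratic whose leading coefficient $\abs{b}^4-(b\cdot c)^2$ is positive by hypothesis), and finish with Cauchy--Schwarz. The bookkeeping differs slightly — you decompose $y=z+s_0b$ and reduce the quadratic in $u=s_0-t$, whereas the paper directly reads off the two endpoints $t_\pm=\frac{(b\mp c)\cdot y}{\abs{b}^2\mp b\cdot c}$ from the factorization — but these are the same computation. One small point in your favor: you explicitly dispose of the degenerate case $z=0$ (where the admissible $u$-set collapses to $\{0\}$ because $\abs{b\cdot c}<\abs{b}^2$), which the paper's write-up leaves implicit since that set of $y$ has measure zero.
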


\begin{proof}
By the change of variable formula, one has 
\begin{equation}\label{chgmtvarjf}
\int_{J}\dif x \int_{\R} \frac{f (x+tb)}{\abs{x}} \dif t 
  = \int_{\R^n} \dif y \int_{D_y} \frac{f (y)}{\abs{y - t b}} \dif t,
\end{equation}
where 
\[
 D_y = \bigl\{ t \in \R \st \abs{b \cdot (y - t b)} \le \abs{c \cdot (y - t b)} \bigr\}.
\]
One notes that for every \(y \in \R^n\) and \(t \in \R\),
\[
  \abs{y - t b} \ge \Bigabs{y - \frac{b \cdot y}{\abs{b}^2} b}
\]
and that 
\[
  D_y = \Bigl\{ t \in \R \st \Bigl(\frac{(b - c) \cdot y}{\abs{b}^2 - b \cdot c} - t\Bigr) \Bigl( t - \frac{(b +c ) \cdot y}{\abs{b}^2 + b \cdot c} \Bigr) \ge 0\Bigr\},
\]
so that by the Cauchy-Schwarz inequality,
\[
\begin{split}
 \abs{D_y} &= 2\Bigabs{\frac{c \cdot (\abs{b}^2 y - (b \cdot y) b)}{\abs{b}^4 - (b \cdot c)^2}}\\
&= 2\Bigabs{\frac{(\abs{b}^2 c - (b \cdot c) b) \cdot (\abs{b}^2 y - (b \cdot y) b)}{\abs{b}^2(\abs{b}^4 - (b \cdot c)^2)}}\\
&\le 2\frac{\sqrt{\abs{b}^2\abs{c}^2 - (b \cdot c)^2}\abs{\abs{b}^2 y - (b \cdot y) b}}{\abs{b}(\abs{b}^4 - (b \cdot c)^2)}.
\end{split}
\]
In view of \eqref{chgmtvarjf}, this implies
\begin{multline*}
\int_{J} \int_{\R} \frac{f (x+tb)}{\abs{x}} \dif t  \leq \int_{\R^n}\frac{\abs{D_y}}{\Bigabs{y - \frac{b \cdot y}{\abs{b}^2} b}} f (y) \dif y \\ \leq 2\frac{\abs{b} \sqrt{\abs{b}^2 \abs{c}^2 - (b \cdot c)^2}}{\abs{b}^4 - (b \cdot c)^2} \int_{\R^n}f (y) \dif y.
\end{multline*}
This completes the proof of the lemma.
\end{proof}

\subsection{Direct sum of general differential operators}

We now generalize the sufficiency part of proposition~\ref{propositionHardyRankOne} to a more general class of non elliptic operators.
\begin{proposition}
\label{propositionHardyRankOnebis}
Consider a linear differential operator \(A(D)\) of order \(1\) from \(V\) to \(E = E_1 \oplus \dotsb \oplus E_\ell\) which can be written for \(\xi \in \R^n\) as
\[
A(\xi)= \sum_{i=1}^{\ell} A_i(P_i (\xi)) \circ Q_i,
\]
where \(P_i \in \Lin(\R^n; \R^n)\) and \(Q_i \in \Lin(V; V)\) are projections and \(A_i\) is an elliptic linear differential operator of order \(1\) on \(\Pi_i := P_i (\R^n)\) from \(V_i = Q_i (V)\) to \(E_i\).\\
If \(\bigcap_{i=1}^{\ell} \ker Q_i = \{0\}\) and for every \(i\in \{1, \dots, \ell\}\), 
\begin{equation}\label{equationvivj}
\begin{split}
\bigcap_{j \in I_i} \ker Q_j \subseteq \ker Q_i, \\
\text{ with } I_i:=\{j : \ker (P_i) \not\subseteq \ker (P_j) \textrm{ and } \ker (P_j) \not\subseteq \ker (P_i)\},
\end{split}
\end{equation}
then there exists \(C>0\) such that for every \(u \in C^\infty_c(\R^n; V)\),
\begin{equation*}
  \int_{\R^n} \frac{\abs{u (x)}}{\abs{x}}\dif x \le C \sum_{i = 1}^\ell \int_{\R^n} \abs{A(D) u (x)}\dif x\:.
\end{equation*}
\end{proposition}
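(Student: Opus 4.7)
The plan is to adapt the proof of proposition~\ref{propositionHardyRankOne} from directional derivatives to the elliptic subspace operators $A_i(D_i)\circ Q_i$. The algebraic skeleton stays the same: the hypothesis $\bigcap_i \ker Q_i = \{0\}$ yields a constant $C_0$ with $\abs{v} \le C_0 \sum_{i=1}^\ell \abs{Q_i v}$ for every $v \in V$, reducing the problem to bounding $\int_{\R^n} \abs{Q_i u}/\abs{x}\,\dif x$ for each fixed $i$. Condition~\eqref{equationvivj} plays the role of the linear dependence relation $a_1 = \sum_j \lambda_j a_{i_j}$ used in the proof of proposition~\ref{propositionHardyRankOne}: it yields a constant $C_i$ with $\abs{Q_i v} \le C_i \sum_{j \in I_i} \abs{Q_j v}$ for all $v \in V$.

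Fix $i$ and, for each $j \in I_i$, introduce
\[
B_j^{(i)} := \{x \in \R^n \st \abs{P_i x} \le \abs{P_j x}\} .
\]
Following the strategy of proposition~\ref{propositionHardyRankOne}, I would split $\int \abs{Q_i u}/\abs{x}$ into the integrals over $\R^n \setminus \bigcup_{j \in I_i} B_j^{(i)}$ and over each $B_j^{(i)}$. On the complement, the bound $\abs{Q_i u} \le C_i \sum_{j \in I_i} \abs{Q_j u}$ combined with the inclusion $\R^n \setminus \bigcup_k B_k^{(i)} \subseteq \R^n \setminus B_j^{(i)} = B_i^{(j)}$ (modulo a null set) controls this piece by $C_i \sum_{j \in I_i} \int_{B_i^{(j)}} \abs{Q_j u}/\abs{x}$. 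Since the relation $j \in I_i$ is symmetric in $i$ and $j$, every term that appears is of the form of a summand $\int_{B_j^{(i)}} \abs{Q_i u}/\abs{x}$ with the roles of $i$ and $j$ possibly interchanged, so everything reduces to the key estimate
\begin{equation*}
\int_{B_j^{(i)}} \frac{\abs{Q_i u(x)}}{\abs{x}}\,\dif x \le C_{ij} \int_{\R^n} \abs{A_i(D_i) Q_i u(x)}\,\dif x ,
\end{equation*}
for every $i$ and every $j \in I_i$. Summing the resulting bounds over $i$ then gives the proposition.

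To prove the key estimate, I would apply lemma~\ref{lemmaGreen} to $A_i$ on $\Pi_i$ to obtain a Green function $G_i$, smooth on $\Pi_i \setminus \{0\}$ and homogeneous of degree $1 - \dim \Pi_i$. Writing $x = x' + x''$ with $x' \in \Pi_i$ and $x'' \in \ker P_i$ (and assuming $P_i$ orthogonal without loss of generality), this yields the representation
\[
Q_i u(x', x'') = \int_{\Pi_i} G_i(x' - y')\bigl[A_i(D_i) Q_i u(y', x'')\bigr]\,\dif y' ,
\]
and Fubini reduces the key estimate to a uniform bound in $(y', x'')$ on the kernel
\[
K(y', x'') := \int_{\{x' \in \Pi_i \st (x' + x'') \in B_j^{(i)}\}} \frac{\abs{G_i(x' - y')}}{\sqrt{\abs{x'}^2 + \abs{x''}^2}}\,\dif x' .
\]
Establishing this bound is the main obstacle; it is the higher-dimensional analogue of the Fubini-type lemma closing the previous subsection (which handles the case $\dim \Pi_i = 1$, where $G_i$ degenerates to a bounded step function). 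The incomparability of $\ker P_i$ and $\ker P_j$ ensures that $P_j|_{\Pi_i}$ is a strict contraction transverse to $\Pi_i \cap \Pi_j$, so that after splitting $\Pi_i$ into the common part $\Pi_i \cap \Pi_j$ and a transverse complement, the slice $\{x' \in \Pi_i \st (x' + x'') \in B_j^{(i)}\}$ is bounded in the transverse direction (with size comparable to $\abs{x''}$) while possibly unbounded along $\Pi_i \cap \Pi_j$. Controlling the Riesz-type singularity of $G_i$ against this anisotropic geometry is the core technical issue, and I expect it to require a near-field/far-field splitting of $G_i$ analogous to the decomposition $G = G_1 + G_2$ used in the proof of proposition~\ref{propositionSufficientHardy}.
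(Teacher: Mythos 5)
Your reduction scheme is exactly the paper's: use $\bigcap_i \ker Q_i = \{0\}$ to pass to $\sum_i \int \abs{Q_i u}/\abs{x}$, use hypothesis~\eqref{equationvivj} and the sets $B_j^{(i)} = \{\abs{P_i x} \le \abs{P_j x}\}$ together with the symmetry of the relation $j \in I_i$ to reduce to the single-pair estimate
\[
\int_{B_j^{(i)}} \frac{\abs{Q_i u(x)}}{\abs{x}}\dif x \le C \int_{\R^n} \abs{A_i(P_i(D))\,Q_i u}\,,
\]
assume $P_i$ orthogonal without loss of generality, insert the Green function of $A_i$ on $\Pi_i$ (or the fundamental theorem of calculus when $\dim\Pi_i = 1$), and apply Fubini to land on precisely the kernel $K(y',x'')$ you write down. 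Up to this point you have reconstructed the proof.

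The gap is that you stop there. You explicitly flag the uniform bound on $K$ as ``the main obstacle'' but do not establish it, and the route you gesture at --- a near-field/far-field splitting of $G_i$ in the spirit of the $G = G_1 + G_2$ decomposition from proposition~\ref{propositionSufficientHardy} --- is not what the paper does and is not obviously the right tool here: that earlier decomposition is adapted to a purely radial singular weight at the origin, whereas the difficulty now is the anisotropic geometry of the slice and the mixed weight $1/\abs{(y,z)}$. The paper's lemma~\ref{lemmafinal917} proves the bound by a more elementary chain: (a) in a decomposition $\Pi_1 = (\Pi_1\cap\Pi_j) \oplus \Pi_1'$ with $\Pi_1' = \ker P_j \cap \Pi_1$ (nontrivial because $\ker P_j \not\subseteq \ker P_1$), the slice $B_j^z$ is exactly the cylinder $\{\,y \in \Pi_1 \st \abs{y'} \le \abs{P_j(z)}\,\}$; (b) the pointwise bound $\abs{(y,z)}^{-1} \le \abs{z}^{-1/2}\abs{y}^{-1/2}$; (c) two Hardy--Littlewood rearrangements eliminate the translate $t$ and reduce to $\int_{B_j^z} \abs{z}^{-1/2}\abs{y}^{-(n_1-\frac12)}\dif y$, finite because $\dim(\Pi_1 \cap \Pi_j) < n_1$; (d) a scaling in the cylinder radius gives the factor $\abs{P_j(z)}^{1/2}/\abs{z}^{1/2} \le 1$. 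Your geometric observation that the slice is bounded of size $\sim\abs{x''}$ transversally to $\Pi_i\cap\Pi_j$ is the right picture, but until you convert it into an estimate of this kind the proof is incomplete, and you would in any case need to rule out the failure mode where the Riesz singularity $\abs{y'-t'}^{1-n_1}$ accumulates along the unbounded $\Pi_i\cap\Pi_j$ direction --- exactly what step (c) above handles.
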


The assumption \eqref{equationvivj} implies that \(A(D)\) is canceling. Indeed, for every \(i\in \{1, \dots, \ell\},\) either \(V_i=\{0\}\) or \(I_i\not=\emptyset\). In the latter case, \(\ker (P_i)\ne \{0\}\). Hence, there exists \(\xi\not= 0\) such that \(P_i(\xi)=0,\) which implies that \(A(\xi)[V]\cap E_i =\{0\}\). If \(V_i=\{0\}\), this is true for any \(\xi \in \R^n\).  Since this holds for every \(i\in \{1, \dots, \ell\},\)  \(A(D)\) is canceling.

This proposition coincides with proposition~\ref{propositionHardyRankOne} in the particular case when \(P_i(\xi)= (\xi \cdot b_i) b_i\) and \(Q_i(v)=(a_i\cdot v) a_i\) for  \(a_1, \dotsc, a_\ell \in V \setminus\{0\}\) and \(b_1, \dotsc, b_{\ell} \in \R^n \setminus \{0\}\). We thus have \(V_i = \R a_i\)  and \(\Pi_i = \R b_i\). Indeed, the assumptions \(\bigcap_{i=1}^{\ell} \ker Q_i = \{0\}\)  and \(\bigcap_{j \in I_i} \ker Q_j \subseteq \ker Q_i\) hold true  if and only if the families \(\{a_i\}_{1\leq i \leq \ell}, \{b_i\}_{1\leq i \leq \ell}\) satisfy assumption \eqref{cond2dalgebraic} in proposition~\ref{propositionHardyRankOne}.

As an example, consider the linear differential operator \(A(D)\) on \(\R^4\) from \(\R^2\) to \(\R^4\) defined for \(\xi=(\xi_1,\dots, \xi_4)\) by
\[
A(\xi) = \left( \begin{array}{cc} \xi_1 & 0 \\ 0 & \xi_2\\ \xi_3 & -\xi_4 \\ \xi_4 & \xi_3 \end{array} \right).
\] 
This operator, which is not elliptic and cannot be considered in the framework of proposition~\ref{propositionHardyRankOne}, has the form described in proposition~\ref{propositionHardyRankOnebis} with \(V_1=\R (1,0), V_2 =\R (0,1), V_3=\R^2\) and
\begin{gather*}
A_1(\xi_1) \circ Q_1= (\xi_1\quad 0), \quad  A_2(\xi_2) \circ Q_2 = (0 \quad \xi_2), \\
A_3(\xi_3, \xi_4) \circ Q_3 =\left(\begin{array}{cc} \xi_3 & -\xi_4\\ \xi_4 & \xi_3 \end{array}\right).
\end{gather*}
Assumption \eqref{equationvivj} is satisfied so that the Hardy inequality holds true in that case.
\begin{proof}[Proof of proposition~\ref{propositionHardyRankOnebis}]
The proof is very similar to  the proof of proposition~\ref{propositionHardyRankOne}. We only outline the main differences. Without loss of generality, we assume that \(V_i\not= \{0\}\) and \(\Pi_i\not= \{0\}\).
If \(\Tilde{P}_i\) is the orthogonal projection on \(\ker (P_i)^\perp\), then there exists a differential operator   \(\Tilde{A}_i \) of order \(1\) on \( \ker (P_i)^\perp\) from \(V_i\) to \(E_i\) such that \(\Tilde{A}_i \circ \Tilde{P}_i = A_i \circ P_i\). By construction \(\ker \Tilde{P}_i = \ker P_i\) and \(\Tilde{A}_i\) is elliptic. We can thus assume without loss of generality that \(P_i\) is an orthogonal projection.

Since \(\bigcap_{i=1}^{\ell} \ker Q_i = \{0\}\), there exists \(C>0\) such that for every \(v \in V\), we have 
\[
\abs{v} \leq C \sum_{i=1}^{\ell} \abs{Q_i(v)}.
\]
Thus, we only need to prove that for every \(i \in \{1, \dotsc, \ell\}\), 
\begin{equation}\label{equationchoicei1}
\int_{\R^n} \frac{\abs{Q_i(u)(x)}}{\abs{x}}\dif x \le C \sum_{j=1}^{\ell} \int_{\R^n} \abs{A_j(P_j (D)) Q_j(u)(x)}\dif x\:,
\end{equation}

Consider the case \(i=1\). We define for \(j\in I_1\) the set \(B_j:= \{x \in \R^n : \abs{P_1(x)} \leq \abs{P_j(x)}\}\). Since \(\bigcap_{j\in I_1} \ker Q_j \subseteq \ker Q_1\), there exists \(C>0\) such that for every \(v\in V\),
\[
\abs{Q_1(v)} \leq C \sum_{j\in I_1} \abs{Q_j (v)}.
\]
By using this estimate on \(\R^n\setminus \bigcup_{j\in I_1} B_j\) exactly as in proposition~\ref{propositionHardyRankOne}  (see \eqref{equation10propositionabbis}) we are thus reduced to prove  the analogue of \eqref{equation2propositionabbis}, namely for every \(j \in I_1\)
\begin{equation}
\int_{B_j} \frac{\abs{Q_1(u)(x)}}{\abs{x}}\dif x \leq C \int_{\R^n} \abs{A_1(P_1 (D) )Q_1(u)(x)}\dif x \: .
\end{equation}

Let \(n_1= \dim \Pi_1\). Consider first the case \(n_1=1\) : there exists \(b_1 \in \R^n\), \(|b_1|=1\) and a linear map \(a_1 \in \Lin(V_1 ; E_1) \) such that for every \(v\in V_1\),
\[
A_{1}(P(\xi))[v] = \xi \cdot b_1 a_1[v].
\]
Since \(a_1\) is one-to-one, there exists \(C>0\) (not depending on \(u\) ) such that for every \(x\in \R^n\)
\[
\abs{Q_1(u)(x)} \leq C \abs{a_1[Q_1(u)(x)]}
\] 
By the identity \eqref{equation3propositionab} applied to \(a_1(Q_1(u))\), we thus get
\[
\int_{B_j}\frac{\abs{Q_1(u)(x)}}{\abs{x}}\dif x \leq C \int_{B_j} \frac{\dif x}{\abs{x}}\int_{-\infty}^{0} \abs{A_1(P_1(D))(Q_1(u))(x+t b_1)}\dif t.
\]
By the change of variable formula, we get
\begin{multline}
\int_{B_j}\frac{\abs{Q_1(u)(x)}}{\abs{x}}\dif x  \leq C \int_{\R^n}\abs{A_1(P_1(D))(Q_1(u))(y)}\dif y\int_{J^j_j} \frac{dt}{\abs{y-t b_1}} \label{equationcasn1=1}
\end{multline}
where
\[
J^y_j=\{t : \abs{P_1(y-t b_1)}\leq \abs{P_j(y-t b_1)}\}.
\]

When \(n_1 \geq 2\), we use the Green function \(G_1\) corresponding to \(A_1(P_1(D))\) on \(\Pi_1\) given by lemma~\ref{lemmaGreen}, for which \(G_1\) is homogeneous of degree \(1-n_1\). We write every \(x\in \R^n\) as \(x=(y, z)\in \Pi_1\times \ker P_1\):
\begin{multline}
\label{equationcasn1g1}
\int_{B_j} \frac{\abs{Q_1(u)(x)}}{\abs{x}}\dif x  \\ \leq C \int_{B_j}\frac{\dif y dz}{\abs{(y,z)}}\int_{\Pi_1}\frac{1}{\abs{y-t}^{n_1-1}}\abs{A_1(P_1(D)) Q_1(u)(t,z)}\dif t \\ 
\leq C \int_{\R^n}\abs{A_1(P_1(D)) Q_1(u)(t,z)}\dif t \dif z\int_{B_j^{z}} \frac{\dif y}{\abs{(y,z)}\abs{y-t}^{n_1-1}},
\end{multline}
where 
\[
 B_j^{z}=\{y \in \Pi_1 : (y, z) \in B_j\}.
\]
In view of \eqref{equationcasn1=1} and \eqref{equationcasn1g1},  proposition \ref{propositionHardyRankOnebis} then follows from the next lemma.
\end{proof}
\begin{lemma}\label{lemmafinal917}
There exists \(C>0\) such that for every \(t\in \Pi_1\), for every \(z\in \ker P_1\), 
\begin{equation}\label{equationlast}
\int_{B_j^{z}} \frac{\dif y}{\abs{(y,z)}\abs{y-t}^{n_1-1}} \le C.
\end{equation}
\end{lemma}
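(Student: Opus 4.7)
The plan is a three-step reduction. The first step is a rescaling. The case \(z=0\) is handled separately: the defining condition of \(B_j^0\) forces \(\abs{y}\le\abs{P_j y}\), whence \(y\in \Pi_1\cap \Pi_j\); since \(j\in I_1\) gives \(\Pi_1\not\subseteq \Pi_j\), this subspace has dimension \(d:=\dim(\Pi_1\cap \Pi_j)<n_1\), so \(B_j^0\) has measure zero in \(\Pi_1\) and \eqref{equationlast} holds trivially. For \(z\ne 0\), substitute \(y=\abs{z}y'\) and \(t=\abs{z}t'\): the homogeneities \(\abs{(y,z)}=\abs{z}(\abs{y'}^2+1)^{1/2}\), \(\abs{y-t}=\abs{z}\abs{y'-t'}\) and the scale invariance \(y\in B_j^z\Leftrightarrow y'\in B_j^{z/\abs{z}}\) reduce the problem to bounding
\[
\int_{B_j^{\hat z}}\frac{\dif y'}{(\abs{y'}^2+1)^{1/2}\abs{y'-t'}^{n_1-1}}
\]
uniformly in \(t'\in \Pi_1\) and \(\hat z:=z/\abs{z}\).

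Next, I would localize the domain. Decomposing \(\Pi_1=(\Pi_1\cap \Pi_j)\oplus W\) orthogonally with \(W:=\Pi_1\cap(\Pi_1\cap \Pi_j)^\perp\) and writing \(y'=y'_\parallel+y'_\perp\), the hypothesis \(j\in I_1\) (which also gives \(\Pi_j\not\subseteq \Pi_1\)) yields \(W\ne\{0\}\) and \(W\cap \Pi_j=\{0\}\), hence by compactness \(\theta:=\sup_{w\in W,\,\abs{w}=1}\abs{P_j w}<1\). Since \(y'_\perp\) and \(\hat z\in \Pi_1^\perp\) are both orthogonal to \(\Pi_1\cap \Pi_j\), the decomposition \(P_j(y',\hat z)=y'_\parallel+(P_j y'_\perp+P_j \hat z)\) is orthogonal, so the condition \(\abs{y'}\le\abs{P_j(y',\hat z)}\) simplifies to \(\abs{y'_\perp}\le\abs{P_j y'_\perp+P_j \hat z}\le \theta\abs{y'_\perp}+1\), giving \(\abs{y'_\perp}\le c:=1/(1-\theta)\). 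It thus suffices to bound
\[
I(t'):=\int_{\{\abs{y'_\perp}\le c\}}\frac{\dif y'}{(\abs{y'}^2+1)^{1/2}\abs{y'-t'}^{n_1-1}}
\]
uniformly in \(t'\in \Pi_1\).

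Finally, I would estimate \(I(t')\) by splitting the slab \(\{\abs{y'_\perp}\le c\}\) according to the size of \(\abs{y'_\parallel}\) relative to \(\abs{t'_\parallel}\), the key input being \(d<n_1\). On \(\abs{y'_\parallel}\ge 2(\abs{t'_\parallel}+1)\) the integrand is \(\lesssim\abs{y'_\parallel}^{-n_1}\), integrable over \(y'_\parallel\in\R^d\) because \(d<n_1\). On \(\abs{y'_\parallel}\le\abs{t'_\parallel}/2\) with \(\abs{t'_\parallel}\ge 1\), one has \(\abs{y'-t'}\ge\abs{t'_\parallel}/2\) and the region has volume \(\lesssim\abs{t'_\parallel}^d\), so the contribution is \(\lesssim\abs{t'_\parallel}^{d-n_1+1}\), bounded since \(d\le n_1-1\). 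On the intermediate annulus \(\abs{y'_\parallel}\in[\abs{t'_\parallel}/2,2\abs{t'_\parallel}]\), \((\abs{y'}^2+1)^{1/2}\gtrsim\abs{t'_\parallel}\) while the \(\R^{n_1}\)-integrable singularity \(\abs{y'-t'}^{-(n_1-1)}\) integrates to at most \(O(\abs{t'_\parallel})\) over a region of diameter \(O(\abs{t'_\parallel})\), so the two factors compensate. The regime \(\abs{t'}=O(1)\) is immediate from the boundedness of the relevant domain and the integrability of \(\abs{y'-t'}^{-(n_1-1)}\) on bounded subsets of \(\R^{n_1}\). I expect the main subtlety to be the intermediate annulus: projecting \(\abs{y'-t'}\) crudely onto a single coordinate \(\abs{y'_\parallel-t'_\parallel}\) or \(\abs{y'_\perp-t'_\perp}\) would produce contributions of order \(\abs{t'_\parallel}^{d-1}\abs{t'_\perp}^{-(n_1-1)}\) that are not uniformly controlled, and one must honestly exploit the full \(\R^{n_1}\)-Euclidean structure of \(\abs{y'-t'}\).
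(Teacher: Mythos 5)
Your proof is correct and takes a genuinely different route from the paper's. Both arguments rest on the same geometric inputs: once the $P_i$ are taken to be orthogonal projections, the set $B_j^{z}$ lies in a slab of width comparable to $\abs{P_jz}\le\abs{z}$ in a direction transverse to $\Pi_1\cap\Pi_j$, and $d:=\dim(\Pi_1\cap\Pi_j)<n_1$ because $\Pi_1\not\subseteq\Pi_j$. From there the paper uses the pointwise bound $\abs{(y,z)}\ge\abs{y}^{1/2}\abs{z}^{1/2}$, a double Hardy--Littlewood rearrangement (in the $\Pi_1\cap\Pi_j$ block and in the transverse block) to recenter the kernel $\abs{y-t}^{1-n_1}$ at the origin, and a homogeneity computation giving $C\,\bigl(\abs{P_jz}/\abs{z}\bigr)^{1/2}$; you instead normalize $z$ to the unit sphere by scaling and replace the rearrangement by a direct splitting of $\abs{y'_\parallel}$ against $\abs{t'_\parallel}$ together with the elementary bound $\int_R\abs{y'-t'}^{1-n_1}\dif y'\lesssim\operatorname{diam}R$. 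One structural point in your favor: you split $\Pi_1=(\Pi_1\cap\Pi_j)\oplus W$ with $W=\Pi_1\cap(\Pi_1\cap\Pi_j)^{\perp}$, which is always an orthogonal decomposition of $\Pi_1$, and you only claim containment of $B_j^{\hat z}$ in a slab of uniformly bounded width, which is all that is needed; the paper instead takes $\Pi_1'=\ker P_j\cap\Pi_1$ and asserts the exact cylinder description $B_j^z=\{\abs{y'}\le\abs{P_jz}\}$, but $\ker P_j\cap\Pi_1$ need not complement $\Pi_1\cap\Pi_j$ inside $\Pi_1$, so your variant is more robust. Two small expository points: the regions $\abs{y'_\parallel}\le\abs{t'_\parallel}/2$, $\abs{y'_\parallel}\in[\abs{t'_\parallel}/2,2\abs{t'_\parallel}]$, $\abs{y'_\parallel}\ge2(\abs{t'_\parallel}+1)$ leave the thin annulus $2\abs{t'_\parallel}\le\abs{y'_\parallel}\le2(\abs{t'_\parallel}+1)$ uncovered (it is handled exactly as the middle region, but the ranges should be made to match), and the final ``$\abs{t'}=O(1)$'' case is really the case $\abs{t'_\parallel}=O(1)$ with $t'_\perp$ arbitrary, which your middle-region/diameter bound already covers.
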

\begin{proof} We write \(\Pi_1=(\Pi_1\cap \Pi_j)\oplus \Pi_{1}'\), where \(\Pi_{1}'=\ker P_j \cap \Pi_1\), and any \(y\in \Pi_1\) as \(y=y' + y'' \in \Pi_{1}' \oplus (\Pi_1\cap \Pi_j)\). We thus have 
\[
\abs{P_1(y, z)}^2= \abs{y}^2 = \abs{y'}^2 + \abs{y''}^2. 
\] 
Since \(y''\in \Pi_1\cap \Pi_j\), \(z\in \ker P_1\) and \(P_j\) is an orthogonal projection, we have \(y'' \cdot P_{j}(z)=0\). This gives
\[
\abs{P_j(y, z)}^2= \abs{y'' + P_{j}(z)}^2 = \abs{y''}^2 + \abs{P_{j}(z)}^2.
\]
Hence, the set \(B_j^z\) is a cylinder:
\[
  B_j^{z} = \{ y \in \Pi_1 \st \abs{y'} \leq \abs{P_{j}(z)} \}.
\]
By a pointwise bound on the integrand, we have
\[
\int_{B_j^{z}} \frac{\dif y}{\abs{(y,z)}\abs{y-t}^{n_1-1}}\\
\le 
\int_{B_j^{z}} \frac{\dif y}{\abs{z}^{1/2} \abs{y}^{1/2}\abs{y-t}^{n_1-1}}.
\]
By a double application of the Hardy--Littlewood rearrangement inequality (see for example \cite{LiebLoss}*{theorem 3.4}),
\[
\begin{split}
\int_{B_j^{z}} \frac{\dif y}{\abs{z}^{1/2} \abs{y}^{1/2}\abs{y-t}^{n_1-1}}
&\le \int_{B_j^{z}} \frac{\dif y}{\abs{z}^{1/2} \abs{y}^{1/2}\abs{(y',y''-t'')}^{n_1-1}}\\
&\le \int_{B_j^{z}} \frac{\dif y}{\abs{z}^{1/2} \abs{y}^{n_1 - 1/2}}.
\end{split}
\]
Since \(\Pi_1\not\subseteq \Pi_j\), we have \(\dim \Pi_1\cap \Pi_j < n_1\), so that the right-hand side integral is finite.
By homogeneity, we thus have
\[
  \int_{B_j^{z}} \frac{\dif y}{\abs{z}^{1/2} \abs{y}^{n_1 - 1/2}}
  = C' \frac{\abs{P_j (z)}^{1/2}}{\abs{z}^{1/2}}.
\]
and the conclusion follows.
\end{proof}

\begin{bibdiv}
\begin{biblist}

\bib{Agmon1959}{article}{
   author={Agmon, Shmuel},
   title={The $L_{p}$ approach to the Dirichlet problem. I. Regularity
   theorems},
   journal={Ann. Scuola Norm. Sup. Pisa (3)},
   volume={13},
   date={1959},
   pages={405--448},
}
	
\bib{Agmon}{book}{
   author={Agmon, Shmuel},
   title={Lectures on elliptic boundary value problems},
   series={Van Nostrand Mathematical Studies,
   No. 2},
   publisher={Van Nostrand}, 
   address={Princeton, N.J. -- Toronto -- London},
   date={1965},
   pages={v+291},
}

\bib{Alvino1977}{article}{
   author={Alvino, Angelo},
   title={Sulla diseguaglianza di Sobolev in spazi di Lorentz},
   journal={Boll. Un. Mat. Ital. A (5)},
   volume={14},
   date={1977},
   number={1},
   pages={148--156},
}

\bib{BB2002}{article}{
   author={Bourgain, Jean},
   author={Brezis, Ha{\"{\i}}m},
   title={Sur l'\'equation ${\rm div}\,u=f$},
   journal={C. R. Math. Acad. Sci. Paris},
   volume={334},
   date={2002},
   number={11},
   pages={973--976},
   issn={1631-073X},
}

\bib{BB2003}{article}{
   author={Bourgain, Jean},
   author={Brezis, Ha{\"{\i}}m},
   title={On the equation ${\rm div}\, Y=f$ and application to control of
   phases},
   journal={J. Amer. Math. Soc.},
   volume={16},
   date={2003},
   number={2},
   pages={393--426},
   issn={0894-0347},
}

\bib{BB2004}{article}{
   author={Bourgain, Jean},
   author={Brezis, Ha{\"{\i}}m},
   title={New estimates for the Laplacian, the div-curl, and related Hodge
   systems},
   journal={C. R. Math. Acad. Sci. Paris},
   volume={338},
   date={2004},
   number={7},
   pages={539--543},
   issn={1631-073X},
}

\bib{BB2007}{article}{
   author={Bourgain, Jean},
   author={Brezis, Ha{\"{\i}}m},
   title={New estimates for elliptic equations and Hodge type systems},
   journal={J. Eur. Math. Soc. (JEMS)},
   volume={9},
   date={2007},
   number={2},
   pages={277--315},
   issn={1435-9855},
}

\bib{BourgainBrezisMironescu}{article}{
   author={Bourgain, Jean},
   author={Brezis, Haim},
   author={Mironescu, Petru},
   title={$H^{1/2}$ maps with values into the circle: minimal
   connections, lifting, and the Ginzburg-Landau equation},
   journal={Publ. Math. Inst. Hautes \'Etudes Sci.},
   number={99},
   date={2004},
   pages={1--115},
   issn={0073-8301},
}

\bib{BousquetMironescu}{article}{
   author={Bousquet, Pierre},
   author={Mironescu, Petru},
   title={An elementary proof of an inequality of Maz\cprime{}ya involving $L^1$ vector fields},
   book={
      editor = {Bonheure, Denis},
      editor = {Cuesta, Mabel},
      editor = {Lami Dozo, Enrique J. },
      editor = {Tak\'a\v c, Peter},
      editor = {Van Schaftingen, Jean},
      editor = {Willem, Michel},
    title={Nonlinear Elliptic Partial Differential Equations},
      publisher={American Mathematical Society},
      place={Providence, R. I.},
      series = {Contemporary Mathematics},
      volume = {540},
   },
   date={2011},
   pages={59-63},
}

\bib{Brezis2011}{book}{
   author={Brezis, Haim},
   title={Functional analysis, Sobolev spaces and partial differential
   equations},
   series={Universitext},
   publisher={Springer},
   place={New York},
   date={2011},
   pages={xiv+599},
   isbn={978-0-387-70913-0},
}

\bib{CZ1952}{article}{
   author={Calder\'on, A. P.},
   author={Zygmund, A.},
   title={On the existence of certain singular integrals},
   journal={Acta Math.},
   volume={88},
   date={1952},
   pages={85--139},
}

\bib{CastroWang2010}{article}{
   author={Castro, Hern{\'a}n},
   author={Wang, Hui},
   title={A Hardy type inequality for $W^{m,1}(0,1)$ functions},
   journal={Calc. Var. Partial Differential Equations},
   volume={39},
   date={2010},
   number={3-4},
   pages={525--531},
   issn={0944-2669},
}

\bib{CastroDavilaWang2011}{article}{
   author={Castro, Hern{\'a}n},
   author={D{\'a}vila, Juan},
   author={Wang, Hui},
   title={A Hardy type inequality for $W^{2,1}_0(\Omega)$ functions},
   journal={C. R. Math. Acad. Sci. Paris},
   volume={349},
   date={2011},
   number={13-14},
   pages={765--767},
   issn={1631-073X},
}

\bib{CastroDavilaWang2013}{article}{
   author={Castro, Hern{\'a}n},
   author={D{\'a}vila, Juan},
   author={Wang, Hui},
   title={A Hardy type inequality for $W^{m,1}_0(\Omega)$ functions},
   journal={J. Eur. Math. Soc. (JEMS)},
   volume={15},
   date={2013},
   number={1},
   pages={145--155},
   issn={1435-9855},
%    review={\MR{2998831}},
}

\bib{Gagliardo}{article}{
   author={Gagliardo, Emilio},
   title={Propriet\`a di alcune classi di funzioni in pi\`u variabili},
   journal={Ricerche Mat.},
   volume={7},
   date={1958},
   pages={102--137},
   issn={0035-5038},
}

\bib{Hormander1958}{article}{
   author={H{\"o}rmander, Lars},
   title={Differentiability properties of solutions of systems of
   differential equations},
   journal={Ark. Mat.},
   volume={3},
   date={1958},
   pages={527--535},
   issn={0004-2080},
}

\bib{HormanderI}{book}{
 author={H{\"o}rmander, Lars},
title={The analysis of linear partial differential operators},
 series={Grundlehren der Mathematischen Wissenschaften},
 volume={256},
 edition={2},
 %part={I},
 subtitle={Distribution theory and Fourier analysis},
 publisher={Springer},
 place={Berlin},
 date={1990},
 pages={xii+440},
 isbn={3-540-52345-6},
}

\bib{CKCRAS}{article}{
   author={Kirchheim, Bernd},
   author={Kristensen, Jen},
   title={Automatic convexity of rank--1 convex functions},   
   journal={C. R. Math. Acad. Sci. Paris},
   volume={349},
   date={2011},
   number={7--8},
   pages={407--409},
   issn={1631-073X},
}

\bib{CKPaper}{unpublished}{
   author={Kirchheim, Bernd},
   author={Kristensen, Jen},
   title={On rank one convex functions that are homogeneous of degree one},
   note={in preparation},
}

\bib{LanzaniStein2005}{article}{
   author={Lanzani, Loredana},
   author={Stein, Elias M.},
   title={A note on div curl inequalities},
   journal={Math. Res. Lett.},
   volume={12},
   date={2005},
   number={1},
   pages={57--61},
   issn={1073-2780},
}

\bib{LiebLoss}{book}{
  author={Lieb, Elliott H.},
  author={Loss, Michael},
  title={Analysis},
  series={Graduate Studies in Mathematics},
  volume={14},
  edition={2},
  publisher={American Mathematical Society},
  place={Providence, RI},
  date={2001},
  pages={xxii+346},
  isbn={0-8218-2783-9},
}

\bib{Mazya2007}{article}{
   author={Maz\cprime{}ya, Vladimir},
   title={Bourgain--Brezis type inequality with explicit constants},
   conference={
      title={Interpolation theory and applications},
   },
   book={
      series={Contemp. Math.},
      volume={445},
      publisher={Amer. Math. Soc.},
      place={Providence, RI},
   },
   date={2007},
   pages={247--252},
}

\bib{Mazya2010}{article}{
   author={Maz\cprime{}ya, Vladimir},
   title={Estimates for differential operators of vector analysis involving
   $L^1$-norm},
   journal={J. Eur. Math. Soc. (JEMS)},
   volume={12},
   date={2010},
   number={1},
   pages={221--240},
   issn={1435-9855},
}

\bib{Mironescu2010}{article}{
   author={Mironescu, Petru},
   title={On some inequalities of Bourgain, Brezis, Maz\cprime{}ya, and Shaposhnikova
related to \(L^1\) vector fields},
   journal={C. R. Math. Acad. Sci. Paris},
   volume={348},
   date={2010},
   number={9--10},
   pages={513--515},
   issn={1631-073X},
}

\bib{Nirenberg1959}{article}{
      author={Nirenberg, L.},
       title={On elliptic partial differential equations},
        date={1959},
     journal={Ann. Scuola Norm. Sup. Pisa (3)},
      volume={13},
       pages={115\ndash 162},
}

\bib{Ornstein1962}{article}{
   author={Ornstein, Donald},
   title={A non-equality for differential operators in the \(L_{1}\) norm},
   journal={Arch. Rational Mech. Anal.},
   volume={11},
   date={1962},
   pages={40--49},
   issn={0003-9527},
}

\bib{Spencer}{article}{
   author={Spencer, D. C.},
   title={Overdetermined systems of linear partial differential equations},
   journal={Bull. Amer. Math. Soc.},
   volume={75},
   date={1969},
   pages={179--239},
   issn={0002-9904},
}

\bib{Stein1970}{book}{
   author={Stein, Elias M.},
   title={Singular integrals and differentiability properties of functions},
   series={Princeton Mathematical Series, No. 30},
   publisher={Princeton University Press},
   place={Princeton, N.J.},
   date={1970},
}

\bib{Strauss1973}{article}{
   author={Strauss, Monty J.},
   title={Variations of Korn's and Sobolev's equalities},
   conference={
      title={Partial differential equations},
      address={Univ. California, Berkeley, Calif.},
      date={1971}
   },	
   book={
      editor={D. C. Spencer},
      publisher={Amer. Math. Soc.},
      series={Proc. Sympos. Pure Math.},
      volume={23}, 
      place={Providence, R.I.},
   },
   date={1973},
   pages={207--214},
}

\bib{Tartar1998}{article}{
   author={Tartar, Luc},
   title={Imbedding theorems of Sobolev spaces into Lorentz spaces},
   journal={Boll. Unione Mat. Ital. Sez. B Artic. Ric. Mat. (8)},
   volume={1},
   date={1998},
   number={3},
   pages={479--500},
   issn={0392-4041},
}

\bib{VS2004BBM}{article}{
   author={Van Schaftingen, Jean},
   title={A simple proof of an inequality of {B}ourgain, {B}rezis and
  {M}ironescu},
   journal={C. R. Math. Acad. Sci. Paris},
   volume={338},
   date={2004},
   number={1},
   pages={23--26},
   issn={1631-073X},
}

\bib{VS2004Divf}{article}{
   author={Van Schaftingen, Jean},
   title={Estimates for \(L^1\)-vector fields},
   journal={C. R. Math. Acad. Sci. Paris},
   volume={339},
   date={2004},
   number={3},
   pages={181--186},
   issn={1631-073X},
}

\bib{VS2006BMO}{article}{
   author={Van Schaftingen, Jean},
   title={Function spaces between BMO and critical Sobolev spaces},
   journal={J. Funct. Anal.},
   volume={236},
   date={2006},
   number={2},
   pages={490--516},
}

\bib{VSVectorL1}{article}{
   author={Van Schaftingen, Jean},
   title={Limiting Sobolev inequalities for vector fields and canceling linear differential operators},
   journal={J. Eur. Math. Soc. (JEMS)},
   volume={15},
   date={2013}, 
   number={3}, 
   pages={877--921},
}

\end{biblist}
\end{bibdiv}

\end{document}